\documentclass{amsart}[12pt, papera4]
\linespread{1.1}
\usepackage{tikz}
\usepackage{amssymb}
\usepackage{graphicx}
\usepackage{amsfonts}
\usepackage[all,2cell]{xy} \UseAllTwocells \SilentMatrices
\usepackage{graphicx}
\usepackage{amsthm}
\usepackage{amstext}
\usepackage{amsmath}
\usepackage{amscd}
\usepackage[mathscr]{eucal}
\usepackage{url}

\oddsidemargin -0.25cm \evensidemargin -0.25cm \topmargin -0.5cm
\setlength{\textwidth}{16.5cm} \setlength{\textheight}{22.5cm}

\newtheorem*{theorem*}{Theorem}
\newtheorem{theorem}{Theorem}[section]
\newtheorem{proposition}[theorem]{Proposition}

\newtheorem{lemma}[theorem]{Lemma}

\theoremstyle{definition}
\newtheorem{definition}[theorem]{Definition}
\newtheorem{example}[theorem]{Example}

\theoremstyle{remark}

\numberwithin{equation}{section}

\def\N{{\mathbb N}}

\def\F{{\mathscr{F}}}
\def\G{{\mathscr{G}}}
\def\FS{{\mathscr F_s}}



\begin{document}

	\centerline{}
	
	\centerline{}
	
	\title[On stronger forms of Devaney chaos]{On stronger forms of Devaney chaos}
	
	\author[Shital H. Joshi, Ekta Shah]{Shital H. Joshi$^1$ \MakeLowercase {and} Ekta Shah$^2$}
	
	
	\address{$^{1}$Shree M. P. Shah Arts and Science College, Surendranagar. India and Department of Mathematics, Faculty of Science, The Maharaja Sayajirao University of Baroda, Vadodara. India}
	\email{\textcolor[rgb]{0.00,0.00,0.84}{shjoshi11@gmail.com}}
	
	\address{$^{2}$Department of Mathematics, Faculty of Science, The Maharaja Sayajirao University of Baroda, Vadodara. India}
	\email{\textcolor[rgb]{0.00,0.00,0.84}{ekta19001@gmail.com}}
	
	
	\subjclass[2020]{Primary 37B65, 37B99.}
	
	\keywords{Sensitivity, Devaney chaos, $P-$chaos}
		
	\begin{abstract}
		\noindent	We define and study stronger forms of Devaney chaos and name it as $\mathscr{F}-$Devaney chaos, where $\mathscr{F}$ is a family of subsets of $\mathbb{N}$. Examples of maps which is $\mathscr{F}_t-$Devaney chaotic but not $\mathscr{F}_{cf}-$Devaney chaotic,  $\mathscr{F}_s-$Devaney chaotic but neither $\mathscr{F}_t-$Devaney chaotic nor $\mathscr{F}_{cf}-$Devaney chaotic are discussed. Further, we show that for the maps on infinite metric space without isolated points, $\mathscr{F}-$sensitivity is a redundant condition in the definition $\mathscr{F}-$Devaney chaos. Here $\mathscr{F}=\mathscr{F}_s, \: \mathscr{F}_t, \: \mathscr{F}_{ts}$ or $\mathscr{F}_{cf}$. We also obtain conditions under which Devaney chaos implies $\mathscr{F}_s-$Devaney chaos or $\mathscr{F}_t-$Devaney chaos. Next, we define the concept of $\left(\mathscr{F}, \mathscr{G}\right)-P-$chaos and obtain conditions under which $\left(\mathscr{F}_1, \mathscr{G}_1\right)-P-$chaos implies $\mathscr{F}-$Devaney chaos for different families $\mathscr{F}_1$ and $\mathscr{G}_1$.
	
	\end{abstract}
	\maketitle
	
	\section{Introduction}

\smallskip
\noindent Theory of chaotic dynamical systems is one of the challenging fields of dynamical systems because of its existence  in several disciplines ranging from physics and chemistry to ecology and economics to computer science. 
Conventionally, the term “chaos” means “a state of confusion with no order”. However in the theory of chaotic dynamical systems, the term is defined in more precise form. In 1975, Li and Yorke \cite{ly1} are the first one to connect the term “chaos” with a map. 

\medskip
\noindent In the existing literature, several definitions of chaos have been proposed and studied in detail, though, there is still no definitive, universally accepted mathematical definition of chaos. In fact, as per the authors of \cite{ly}, it is impossible to accept one such universal definition of chaos.  Most acknowledged definition of chaos includes Li – Yorke chaos, distributional chaos, positive topological entropy and its variants, Devaney Chaos, stronger forms of transitivity such as weakly mixing, mixing. 

\medskip
\noindent One of the key ingredient in the definition of Devaney chaos is sensitivity. The term sensitive dependence on initial condition was first used by Ruelle in \cite{RD} and the concept of sensitivity from the work of Ruelle was introduced to topological dynamics by Auslander and Yorke in \cite{AY}. 

\medskip
\noindent By a dynamical system we mean a pair $(X,f)$, where $X$ is an infinite metric space without isolated points and $f: X\longrightarrow X$ is a continuous map. Roughly, a map $f$ is sensitive if given any region in the space $X$, there exists two points in the region and $n \in \N$ such that the $n^{th}$ iterate of the two points under the map $f$ are separated significantly. The largeness of the set of all $n \in \N$ where this \textquoteleft significant separation' or sensitivity occurs can be considered as a measure of how sensitive system is. On the basis of largeness of these sets various stronger forms of sensitivity, known as $\F-$sensitivity, was first studied by Moothathu \cite{TKS}, where $\F$ is a family of subsets of $\N$. In this paper we define and study stronger forms of Devaney chaos using $\F-$sensitivity.

\medskip
\noindent Another important property in the theory of dynamical systems is the shadowing property as it is close to the stability of system and to the chaoticity of the system. Arai and Chinen defined the concept of $P-$chaos using the shadowing property and studied its relationship with different types of chaos \cite{Arai}. In the same paper they have proved that $P-$chaotic maps on a continuum are Devaney chaotic.

\medskip
\noindent Several generalizations and variations of shadowing property are studied. One such variation allow larger jumps in the definition of pseudo orbits and also demand adjustments in the tracing of the pseudo orbits by allowing mistakes. Oprocha, in \cite{Oprocha}, defined this variation in the most general form by introducing the concept of $(\mathscr{F}, \mathscr{G})-$shadowing property. Here $\mathscr{F}, \mathscr{G}$ are families of subsets of $\N_0=\N\cup \{0\}$. We replace the shadowing property by $\left(\mathscr{F}, \mathscr{G}\right)-$shadowing property in the definition of $P-$chaos and study $\left(\mathscr{F}, \mathscr{G}\right)-P-$chaos and study its relationship with $\F-$Devaney chaos.

\medskip
\noindent The paper is organized in the following manner. In Section 2 we discuss preliminaries required for the content of the paper. In \cite{TKS}, Moothathu defined and studied the stronger forms of sensitivity. For a given family $\F$ of $\N$, we call it as $\F-$sensitivity. We show that a map $f$ is $\F-$sensitive if and only if the iterate of $f$, $f^n$ is $\F-$sensitive. Further, certain results regarding shift spaces are also obtained in section 2.  Using the definition of $\F-$sensitivity, we define stronger forms of Devaney's chaos and name it as 
$\F-$Devaney chaos in section 3. Various examples of $\F-$Devaney chaotic maps are discussed. Also, we give examples of map which is $\F_t-$Devaney chaotic but not $\F_{cf}-$Devaney chaotic, $\F_s-$Devaney chaotic but is neither $\F_t-$Devaney chaotic nor $\F_{cf}-$Devaney chaotic. Banks \textit{et al} proved that every transitive map with dense set of periodic points on an infinite metric space without isolated points is Devaney chaotic \cite{Bank}. We prove similar result for  $\F-$Devaney chaotic maps. Note that every $\F-$Devaney chaotic map is Devaney chaotic but converse need not be true in general.  We obtain several conditions under which Devaney chaotic maps are $\F-$Devaney chaotic. In Section 4, we define the notion of $(\F, \mathscr{G})-P-$chaos by replacing shadowing property with $(\F_1,\mathscr{G}_1)-$shadowing and obtain many conditions which implies $\F-$Devaney chaos for various families $\F_1, \mathscr{G}_1$ on $\N_0$.

\section{Preliminaries}
\subsection{Subsets of $\N$}

\noindent Let $A \subset \mathbb{N}$. Then $A$ is said to be a
\emph{co--finite} set if $\mathbb{N}\setminus A$ is finite,
\emph{thick} set if $A$ contains arbitrary large blocks of consecutive numbers,
\emph{syndetic} set if $\mathbb{N}\setminus A$ is not thick (equivalently, it has a bounded gaps),
\emph{thickly  syndetic} set if for every $n\in \mathbb{N}$ the positions where length $n$ block begin forms a syndetic set, and
\emph{piecewise syndetic} set if it is an intersection of thick and syndetic sets. 
\noindent A \emph{family} on $\mathbb{N}$ is any set $\mathscr{F}\subset \mathscr{P}(\mathbb{N})$, which is upward hereditary; that is; if $A\in \mathscr{F}$ and $A\subset B\subset \mathbb{N}$ then $B\in \mathscr{F}$. The family of syndetic subsets of $\N$ is denoted by $\mathscr{F}_s$, the family of thick subsets of $\N$ is denoted by $\mathscr{F}_t$, the family of thickly syndetic subsets of $\N$ is denoted by $\mathscr{F}_{ts}$, the family of piecewise syndetic subsets of $\N$ is denoted by $\mathscr{F}_{ps}$ and the family of co--finite subsets of $\N$ is denoted by $\mathscr{F}_{cf}$.

\noindent The \emph{lower density} of $A\subset \N$ is defined by $$\underline{d}(A)=\liminf_{n\to \infty}\frac{|A\cap\{0,1,\dots,n-1\}|}{n}.$$ If we replace $\liminf$ with $\limsup$ in the above formula, we get $\overline{d}(A)$, the \emph{upper density} of $A$. If $\underline{d}(A)=\overline{d}(A)$, then the common value is said to be \emph{density} of $A$. The family of subsets of $\N$ with positive lower density is denoted by $\F_{\underline{d}}$. The family of subsets of $\N$ with density $1$ is denoted by $\mathscr{D}$.

\subsection{Dynamical Systems}

\noindent Recall by a \emph{dynamical system} we mean a pair $(X,f)$, where $X$ is an infinite metric space without isolated points and $f:X\longrightarrow X$ is a continuous map.  If $X$ is a compact metric space then we call $(X,f)$ as a \emph{compact dynamical system}. The \emph{orbit} of a point $x \in X$ is denoted by $O_f(x)$ and is defined by $O_f(x)=\{f^n(x):n \in \mathbb{N}_0\}.$  A point $x\in X$ is called a  \emph{periodic  point} of $f$ with prime period $m$ if $m$  is the smallest positive integer such that $f^ m(x)=x$. The set of all periodic points of the system is denoted by $P(f)$. For $x\in X$ and $U, V\subset X$, let $N_f(x,U)=\{n\in \mathbb{N}:f^n(x)\in U\}$ and $N_f(U,V)=\{n\in \mathbb{N}:f^n(U)\cap V\neq \phi\}$. A map $f$ is called \emph{non-wandering} if $N_f(U,U)\neq \phi$ for every nonempty open subset $U$ of $X$. 

\smallskip
\noindent A subset $Y$ of $X$ is called \emph{invariant} if $f(Y)\subseteq Y$. Map $f$ is called \emph{minimal} if for each point $x\in X$ $O_f(x)$ is dense in $X$. An invariant subset $Y\subset X$ is called \emph{minimal set} if the system $(Y,f|_Y)$ is a minimal dynamical system. A point $x\in X$ is called a \emph{minimal point} if it lies in some minimal subset of $X$. The set of all minimal points of $f$ is denoted by $M(f)$. Clearly, $P(f)\subset M(f)$. For a non--empty open subset $U$ of $X$ and a positive real number $\delta$, denote $$ {N_f (U,\delta)=\{n \in \mathbb{N}:\mbox{there exist}\ x, y \in U \  \mbox{with} \  d(f^{n}(x),f^{n}(y))>\delta\}.} $$ 

\smallskip	
\noindent The notion of sensitivity was first studied in \cite{AY}. We recall the definition.
\begin{definition}
	A map $f$ is said to be \emph{sensitive} if there exists $\delta>0$ such that for every nonempty open subset $U$ of $X$, $N_f (U,\delta)\neq \phi$. The constant $\delta$ is called \emph{sensitivity constant} for $f$.
\end{definition}

\smallskip
\noindent		The notion of stronger forms of sensitivity were first studied in \cite{TKS}. 
\begin{definition}
	Let $\mathscr{F}$ be a family on $\N$. A map $f$ is said to be \emph{$\mathscr{F}-$sensitive} if there exists $\delta > 0$ such that for every nonempty open set $U \subset X$, $N_f (U,\delta)\in \mathscr{F}$. 
\end{definition}

\medskip
\noindent Since we could not find the proofs of certain properties of $\F-$sensitive maps in the literature, we discuss the proof these properties here. In the following Theorem we obtain necessary and sufficient condition for iterates of $\F-$sensitive map to be $\F-$sensitive. 
\medskip

\begin{lemma} \label{T6}
	Let $(X,f)$ be a dynamical system. If $f$ is uniformly continuous and $\F-$sensitive then for any $m\in \N$, $f^m$ is $\F-$sensitive, where $\F=\F_s$, $\F_t$, $\F_{ts}$ or $\F_{cf}$.
\end{lemma}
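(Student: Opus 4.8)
The plan is to prove the contrapositive-free direction directly: assuming $f$ is $\F$-sensitive with constant $\delta$, produce a sensitivity constant $\delta'>0$ for $f^m$ and show that for every nonempty open $U$ the set $N_{f^m}(U,\delta')$ belongs to $\F$. The basic mechanism is that if $d(f^n(x),f^n(y))>\delta$ for some $n$, then writing $n = mq + r$ with $0\le r < m$, the two points $f^r(x)$ and $f^r(y)$ are moved apart by $f^{mq}=(f^m)^q$; but to say something about $U$ itself I need the separation to be witnessed by a pair inside $U$, so I first shrink: by (uniform) continuity of $f$ choose $\delta'>0$ so that $d(a,b)\le\delta'$ implies $d(f^j(a),f^j(b))\le\delta$ for all $0\le j<m$. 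Then for a nonempty open $U$, if $d((f^m)^q(x),(f^m)^q(y))\le\delta'$ were to hold for some pair $x,y\in U$ and some $q$ lying in a ``bad'' block, I could not yet conclude anything — so instead I run the argument forward: from $N_f(U,\delta)\in\F$ I extract, for each $n\in N_f(U,\delta)$, a witnessing pair and the residue $r=r(n)\in\{0,\dots,m-1\}$, and I want a single residue class contributing a set in $\F$. Here I will need that $\F$ has the (finite) Ramsey-type / partition-regularity property: if $A\in\F$ and $A=A_0\cup\dots\cup A_{m-1}$ then some $A_i\in\F$. This holds for $\F_s,\F_t,\F_{ts},\F_{cf}$ — that is exactly why these four families are singled out in the hypothesis.

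Concretely: fix the sensitivity constant $\delta$ of $f$. Let $A=N_f(U,\delta)\in\F$. Partition $A$ according to the residue of $n$ modulo $m$; by partition regularity of $\F$ some residue class $A_r=\{n\in A: n\equiv r \pmod m\}$ lies in $\F$. For $n=mq+r\in A_r$ pick $x_n,y_n\in U$ with $d(f^n(x_n),f^n(y_n))>\delta$. Now I want to land inside $U$ rather than inside $f^r(U)$. Replace $U$ by an arbitrary nonempty open set and instead consider the open set $V=f^r$-preimage issue — cleaner is to argue with $N_{f^m}(U,\delta')$ directly as follows. Because $f$ is uniformly continuous there is $\delta'>0$ with: $d(a,b)\le\delta'\ \Rightarrow\ d(f^j a,f^j b)\le\delta$ for $j=0,1,\dots,m-1$. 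I claim $\{q+1 : mq+r\in A_r\}\subseteq N_{f^m}(U,\delta')$ — wait, the indexing must be handled carefully: from $d(f^{mq+r}(x_n),f^{mq+r}(y_n))>\delta$ and the choice of $\delta'$ we get $d(f^{mq}(x_n),f^{mq}(y_n))>\delta'$, i.e.\ $d((f^m)^q(x_n),(f^m)^q(y_n))>\delta'$ with $x_n,y_n\in U$, hence $q\in N_{f^m}(U,\delta')$. So $N_{f^m}(U,\delta')\supseteq\{q : mq+r\in A_r\text{ for some }n\}$, which is, up to the affine bijection $n\mapsto (n-r)/m$, the set $A_r$. Since affine images/preimages of sets in $\F_s,\F_t,\F_{ts},\F_{cf}$ again lie in the same family (syndeticity, thickness, thick-syndeticity, co-finiteness are all preserved under $n\mapsto (n-r)/m$ on the appropriate sub-progression — this needs a short verification, e.g.\ bounded gaps scale by $1/m$, arbitrarily long blocks of length $\ell$ come from blocks of length $m\ell$, etc.), we conclude $N_{f^m}(U,\delta')\in\F$. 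As $U$ was an arbitrary nonempty open set, $f^m$ is $\F$-sensitive with constant $\delta'$.

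The routine parts are: (i) the uniform-continuity choice of $\delta'$ and the residue bookkeeping; (ii) checking that each of the four families is closed under the partition into $m$ pieces and under the contraction $n\mapsto \lfloor n/m\rfloor$ (or the exact affine map on each residue class). The step I expect to be the main obstacle — really the only place any idea is needed — is item (ii) for $\F_{ts}$, \emph{thickly syndetic} sets, where one must simultaneously control, for every block length, that the set of starting positions remains syndetic after contracting by $m$; I would handle it by noting that a length-$\ell$ block of consecutive integers inside the contracted set $\{q : mq+r\in A_r\}$ is produced by a length-$(m\ell)$ block inside $A_r$ whose starting position has the right residue, and use that $A_r$ (being thickly syndetic, intersected with a residue class — here one must take $A$ thickly syndetic and note the residue class $A_r$ obtained from partition regularity is still ``thick along the progression'') supplies these; the syndeticity of the starting positions transfers because syndetic sets remain syndetic under contraction by $m$. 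If controlling $\F_{ts}$ on a single residue class proves awkward, the fallback is to prove the statement first for $\F_t$ and $\F_s$ and then deduce $\F_{ts}$ from the identity $\F_{ts}=\{A : A\cap S\in\F_t\text{-ishly thick for all syndetic }S\}$-type characterizations, but I expect the direct block-counting argument to go through.
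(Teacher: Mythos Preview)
Your central claim --- that each of $\F_s,\F_t,\F_{ts},\F_{cf}$ is partition regular, so that from $A=N_f(U,\delta)\in\F$ you may pass to a single residue class $A_r=A\cap(m\N+r)\in\F$ --- is false for \emph{all four} families. For $\F_t$, $\F_{ts}$ and $\F_{cf}$ the partition $\N=2\N\cup(2\N+1)$ already kills it: $\N$ lies in each family but neither piece is even thick (no two consecutive integers), hence not in $\F_t$, $\F_{ts}$ or $\F_{cf}$. For $\F_s$, build $A$ by alternating ever-longer runs of odd integers with ever-longer runs of even integers (e.g.\ $\{1,3,\dots,19\}\cup\{20,22,\dots,220\}\cup\{221,223,\dots\}\cup\cdots$); then $A$ has bounded gap $2$, yet $A\cap 2\N$ and $A\cap(2\N+1)$ each have arbitrarily long gaps. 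In particular, for $m\ge 2$ no $A_r\subset m\N+r$ can ever be thick, so your subsequent ``affine contraction preserves thickness'' step is vacuous rather than wrong --- the input is never available.

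The paper sidesteps this completely by \emph{not} choosing a residue class. It takes the full image $B=\{q_n:n\in N_f(U,\delta)\}$, where $n=q_n m+r_n$ with $0<r_n\le m$, observes (via the same uniform-continuity step you used) that $B\subset N_{f^m}(U,\eta)$, and then verifies directly, case by case, that the ``floor--by--$m$'' map sends a set in $\F$ to a set in $\F$: bounded gaps contract for $\F_s$; a block of length $km$ in $A$ yields a block of length $k$ in $B$ for $\F_t$; these two combine for $\F_{ts}$; and for $\F_{cf}$ one checks that once $n$ is large enough every integer $\ge q_n$ appears in $B$. Your uniform-continuity reduction and the contraction heuristics are sound; the fix is simply to drop the residue-class selection and work with the whole quotient set $B$ from the start.
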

\begin{proof}
	\noindent Suppose a uniformly continuous map $f$ is $\F-$sensitive with sensitivity constant $\delta$. We show that for $m\in \N$, $\phi =f^m$ is $\F-$sensitive.  By uniform continuity of $f^i$, $i=1,2,\cdots,m$, there exists $\eta>0$ such that for $x,y\in X$
	\begin{equation} \label{eq1}
		d(f^i(x),f^i(y))\geq \delta \implies d(x,y)> \eta.
	\end{equation}
	Let $U$ be a non--empty open subset of $X$. Then $\F-$sensitivity of $f$ implies that the set $N_f(U,\delta)\in \F$. Now, for each $n\in N_f(U,\delta)$, there exists $q_n, r_n \in \mathbb{N}_0$ such that $n=q_nm+r_n$, where 
	$0< r_n \leq m$. Therefore by Equation \ref{eq1}, for each $n \in N_f(U, \delta)$, we obtain
	$$ d\left (\phi^{q_n}(x),\phi^{q_n}(y) \right )>\eta.$$
	Set $B=\left\{q_n \in \N  \; | \; 0 < n-q_n m \leq m, \; n\in N_f(U,\delta)\right\}$. Then $B \subset N_{\phi}(U, \eta)$. It is sufficient to show that $B\in \F$ as $B\in \F$ implies $N_{\phi}(U,\eta)\in \F$. We consider the following different cases. 
	
	\smallskip					
	\noindent \textbf{Case 1:} $\F=\F_s$. 
	
	\noindent Without loss of generality we can assume that elements of $N_f(U, \delta)$ is in increasing order. Let $M$ be the bounded gap between the consecutive elements of $N_f(U, \delta)$. Then, for $q_n \in B$, it follows that 				
	$$q_n m < n \mbox{ and } q_n m \geq n-m.$$
	This further implies that 
	$$q_n-q_{n-1}<\frac{\displaystyle{n}}{\displaystyle{m}}+1-\frac{\displaystyle{n-1}}{\displaystyle{m}}<\frac{\displaystyle{M}}{\displaystyle{m}}+1$$
	Put $M'=\left[\frac{\displaystyle{M}}{\displaystyle{m}}+1\right]+1$. Then for $n\in N_f(U, \delta)$, 				
	$$q_n-q_{n-1} < M'.$$ 
	Hence, $B\in \F_s$. 
	
	\smallskip			
	\noindent \textbf{Case 2:} $\F=\F_t$. 
	
	\noindent Then by using the techniques similar as in Theorem 3.14 of \cite{ES}, it is easy to prove that $B\in \F_t$.
	
	\smallskip
	\noindent \textbf{Case 3:} $\F=\F_{ts}$
	
	\noindent Note that $B=\{q_n\in \mathbb{N}:\frac{n}{m}-1\leq q_n < \frac{n}{m},\; n\in N_f(U,\delta)\}.$ Since $N_f(U,\delta)\in \F_{ts}$, it follows that $N_f(U,\delta)\in \F_s$ with bounded gap $M$ and also $N_f(U,\delta)\in \F_t$. Therefore, by Case 2, $B\in \F_t$. We complete the proof by showing that for any $N\in \N$, $$B_N=\{r\in \mathbb{N}:r,r+1,\dots, r+N\in B\}\in \F_s.$$ Since $B\in \F_t$ it follows that for any $N\in \mathbb{N}$, there are infinitely many blocks of length $N$ in $B$ and therefore $B_N$ is infinite. Let $r_i,r_{i+1} \in B_N$ with $r_i<r_{i+1}$. Then there exist $n_i$ and $n_{i+1} \in N_f(U,\delta)$ such that $$\frac{n_i}{m}-1 \leq r_i < \frac{n_i}{m} \mbox{ and } \frac{n_{i+1}}{m}-1 \leq r_{i+1} < \frac{n_{i+1}}{m}.$$ Therefore, $$r_{i+1}-r_i<1+\frac{M}{m}.$$ Since $i$ is arbitrary it follows that $B_N$ has bounded gaps. Hence, $B_N\in \F_{s}$.

	\smallskip
	\noindent \textbf{Case 4:} $\F=\F_{cf}$
	
	\noindent Clearly, for given $n\in N_f(U,\delta)$, there is $q_n\in \N$ such that $\frac{\displaystyle{n}}{\displaystyle{m}}-1 \leq q_n < \frac{\displaystyle{n}}{\displaystyle{m}}$. Therefore $q_n\in B$. Further, $\frac{\displaystyle{n}}{\displaystyle{m}} \leq q_n+1 < \frac{\displaystyle{n}}{\displaystyle{m}}+1$. Hence $n+m\in N_f(U,\delta)$, implies $q_n+1\in B$. In general, for given $i\in \mathbb{N}$, $n+im\in N_f(U,\delta)$ implies $q_n+i\in B$. Since $N_f(U,\delta)\in \mathscr{F}_{cf}$, it follows that $q_n+i\in B$ for all $i\in \N$. Hence, $B\in \F_{cf}$.
\end{proof}

\begin{lemma}\label{fnf}
	Let $(X,f)$ be a dynamical system such that $f$ is uniformly continuous. Suppose for some $n\in \N$, $f^n$ is $\F-$sensitive. Then $f$ is $\F-$sensitive, where $\F=\F_s$, $\F_t$, $\F_{ts}$ or $\F_{cf}$.
\end{lemma}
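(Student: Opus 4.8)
The plan is to run the argument of Lemma~\ref{T6} in reverse, the key point being that a single ``separation index'' $k$ for $f^n$ forces not merely the index $kn$ but the whole block $\{(k-1)n+1,\dots,kn\}$ of $n$ consecutive integers to be separation indices for $f$. Concretely, I would set $\phi=f^n$, let $\delta$ be an $\F$-sensitivity constant for $\phi$, and use uniform continuity of the iterates $f,f^2,\dots,f^{n-1}$ (composites of uniformly continuous maps) to obtain, exactly as for Equation~\ref{eq1}, a constant $\eta>0$ such that $d(f^j(x),f^j(y))\ge\delta$ implies $d(x,y)>\eta$ for all $x,y\in X$ and all $j\in\{0,1,\dots,n-1\}$. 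As $\eta$ depends only on $f,n,\delta$, it will be the sensitivity constant witnessing $\F$-sensitivity of $f$.

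Next, for a nonempty open $U\subseteq X$ I would put $A=N_\phi(U,\delta)\in\F$ and verify that
\begin{equation*}
	N_f(U,\eta)\;\supseteq\;\bigcup_{k\in A}\{(k-1)n+1,(k-1)n+2,\dots,kn\}.
\end{equation*}
Indeed, given $k\in A$ choose $x,y\in U$ with $d(f^{kn}(x),f^{kn}(y))>\delta$; for each $j\in\{0,\dots,n-1\}$ one has $kn-j\in\N$ since $k\ge 1$, and applying the implication above to $f^{kn-j}(x),f^{kn-j}(y)$ with the map $f^j$ gives $d(f^{kn-j}(x),f^{kn-j}(y))>\eta$, i.e. $kn-j\in N_f(U,\eta)$; letting $j$ range over $0,\dots,n-1$ produces exactly the block $\{(k-1)n+1,\dots,kn\}$.

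It then remains to check, family by family, that this lower bound for $N_f(U,\eta)$ already belongs to $\F$; since each family is upward hereditary, that suffices. For $\F=\F_s$: if $A$ has gaps bounded by $M$, the dilation $\{kn:k\in A\}\subseteq N_f(U,\eta)$ has gaps bounded by $Mn$, hence is syndetic (the estimate of Case~1 of Lemma~\ref{T6}). For $\F=\F_t$: consecutive blocks chain, since the block of $k$ ends at $kn$ and that of $k+1$ begins at $kn+1$, so a run $\{k,\dots,k+L\}\subseteq A$ yields a run of length $(L+1)n$ in $N_f(U,\eta)$, and thickness of $A$ gives thickness of $N_f(U,\eta)$. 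For $\F=\F_{cf}$: $A$ contains a tail $\{k_0,k_0+1,\dots\}$, whose chained blocks cover the tail $\{(k_0-1)n+1,(k_0-1)n+2,\dots\}$, so $N_f(U,\eta)$ is cofinite. For $\F=\F_{ts}$: given $N\in\N$, pick $N'$ with $(N'+1)n\ge N$; since $A$ is thickly syndetic the set $B$ of left endpoints of length-$(N'+1)$ runs in $A$ is syndetic, each such run chains to a run of length $\ge N$ in $N_f(U,\eta)$ beginning at $(k-1)n+1$, and $\{(k-1)n+1:k\in B\}$, being a shift of a dilation of $B$, is syndetic; hence the left endpoints of length-$N$ runs in $N_f(U,\eta)$ form a syndetic set for every $N$, so $N_f(U,\eta)\in\F_{ts}$.

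The one step that needs genuine care is the $\F_{ts}$ case, where the run-length required of $N_f(U,\eta)$ must be matched against a suitable run-length for $A$ before invoking thick syndeticity; the two remaining facts used — dilations of syndetic sets are syndetic, and supersets of members of $\F$ lie in $\F$ — are routine, the former being identical in spirit to Case~1 of Lemma~\ref{T6}.
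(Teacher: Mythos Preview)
Your proof is correct and follows essentially the same approach as the paper's: both obtain $\eta$ from uniform continuity of $f,\dots,f^{n-1}$, identify the block set $\bigcup_{k\in A}\{(k-1)n+1,\dots,kn\}=\bigcup_{q=0}^{n-1}(nA-q)\subseteq N_f(U,\eta)$ where $A=N_{f^n}(U,\delta)$, and then verify family by family that this block set lies in $\F$. Your case analysis is in places more explicit than the paper's (which defers $\F_t$ to an external reference and handles $\F_{ts}$ by a slightly different bookkeeping of gaps in $B_N$), but the underlying combinatorics is identical.
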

\begin{proof}
	Let $\delta$ be a sensitivity constant for $f^n$ and let $U$ be a non--empty open subset of $X$. Then $N_{f^n}(U,\delta)\in \F$. By uniform continuity of $f^i$, $0\leq i\leq n-1$, there exists $\eta$, $0<\eta<\delta$, such that $d(f^i(x),f^i(y))\geq \delta\implies d(x,y)>\eta$. In each of the following cases we show that $N_f(U,\eta)\in \F$.
	
	\medskip			
	\noindent Case 1. $\F=\F_s$
	
	\noindent Note that 
	$$N_{f^n}(U,\delta) = \left\{k\in\N \; : \; \exists \; y,z \in U \ni d\left((f^n)^k(y),(f^n)^k(z)\right) > \delta \right\}$$
	But this implies $n N_{f^n}(U,\delta)\subset N_f(U,\delta)$. Also, $N_{f^n}(U,\delta)\in \FS$ implies $nN_{f^n}(U,\delta)\in \F_s$ \cite[Theorem 3.1]{KP}. Therefore, $N_f(U,\delta)\in \FS$.
	
	\medskip			
	\noindent Case 2. $\F=\F_t$
	
	\noindent Then by arguments as used in Theorem 3.14 of \cite{ES}, it follows that $N_f(U,\eta)\in \F_t$.
	
	\medskip
	\noindent Case 3. $\F=\F_{ts}$
	
	\noindent Denote $N_{f^n}(U,\delta)=A$. Then $A\in \F_{ts}$. Set $B=\bigcup_{q=0}^{n-1}(nA-q).$ It is sufficient to show that $B\in \F_{ts}$ as $B\subset N_f(U,\eta)$. Since $A\in \F_{ts}$ it follows that $A\in \F_t$ and hence by Case 2, $B\in \F_t$. For each $N\in \N$, $B_N=\{r\in \N: r,r+1,\dots, r+N\in B\}$. Clearly, $B_N$ is infinite. Let $r_i, \; r_{i+1}\in B_N$ with $r_i<r_{i+1}$. Then $r_i=nn_1-q_1$ and $r_{i+1}=nn_2-q_2$ for some $n_1, \; n_2 \in A$ and $0\leq q_1, \; q_2 \leq n-1$. It is easy to observe that $0<r_{i+1}-r_i \leq n(M+1)-1$, where $M$ is a bounded gap for $A$. Therefore $B_N\in \F_{s}$.

	\medskip
	\noindent Case 4. $\F=\F_{cf}$
	
	\noindent Denote $N_{f^n}(U,\delta)=A$. Then $A\in \F_{cf}$. Set $B=\bigcup_{q=0}^{n-1}(nA-q).$ Clearly, $B\in \mathscr{F}_{cf}$ and $B\subset N_f(U,\eta)$. Therefore $N_f(U,\eta)\in  \mathscr{F}_{cf}$
	
	\smallskip
	\noindent Thus in any case $N_f(U,\eta)\in \F$. Hence $f$ is $\F-$sensitive with sensitive constant $\eta$.	
\end{proof}

\begin{proposition}
	Let $(X,f)$ and $(Y,g)$ be compact conjugate dynamical systems. Then $f$ is $\F-$sensitive if and only if $g$ is $\F-$sensitive, where $\F$ is a family of subsets of $\N$. That is, $\F-$sensitivity is preserved under conjugacy.
\end{proposition}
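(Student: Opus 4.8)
The plan is to transport the sensitivity condition across the conjugacy $h : X \to Y$ with $h \circ f = g \circ h$, using that $h$ is a homeomorphism between compact metric spaces and hence uniformly continuous in both directions. First I would fix a sensitivity constant $\delta > 0$ for $f$, so that $N_f(U,\delta) \in \mathscr{F}$ for every nonempty open $U \subset X$. By uniform continuity of $h^{-1}$, there is $\eps > 0$ such that $d_Y(y_1,y_2) > \eps$ whenever $d_X(h^{-1}(y_1), h^{-1}(y_2)) > \delta$; equivalently, $d_Y(h(x_1), h(x_2)) \le \eps$ forces $d_X(x_1,x_2) \le \delta$. I claim $\eps$ is a sensitivity constant for $g$ realizing the same family.

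The key step is the set equality $N_g(h(U), \eps) \supseteq N_f(U, \delta)$ for every nonempty open $U \subset X$. Indeed, if $n \in N_f(U,\delta)$, pick $x, x' \in U$ with $d_X(f^n(x), f^n(x')) > \delta$. Writing $y = h(x), y' = h(x') \in h(U)$ and using $g^n \circ h = h \circ f^n$, we get $h^{-1}(g^n(y)) = f^n(x)$ and similarly for $y'$, so $d_X(h^{-1}(g^n(y)), h^{-1}(g^n(y'))) > \delta$, whence $d_Y(g^n(y), g^n(y')) > \eps$ by the choice of $\eps$. Thus $n \in N_g(h(U), \eps)$. Since $h$ is a homeomorphism, $h(U)$ ranges over all nonempty open subsets of $Y$ as $U$ ranges over all nonempty open subsets of $X$. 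Because $\mathscr{F}$ is upward hereditary (a family), $N_f(U,\delta) \in \mathscr{F}$ together with $N_f(U,\delta) \subseteq N_g(h(U),\eps)$ yields $N_g(h(U),\eps) \in \mathscr{F}$. Hence $g$ is $\mathscr{F}$-sensitive with constant $\eps$.

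The converse direction is identical with the roles of $f$ and $g$ (and of $h$ and $h^{-1}$) interchanged, so $\mathscr{F}$-sensitivity is symmetric in the conjugacy and the equivalence follows. I do not expect a genuine obstacle here: the only points requiring care are that compactness is what guarantees $h$ and $h^{-1}$ are uniformly continuous (so the $\delta$–$\eps$ translation of the sensitivity constant is legitimate), that one must pass to open \emph{images} $h(U)$ rather than preimages to exhaust the open sets of $Y$, and that the argument uses nothing about $\mathscr{F}$ beyond being a family, so it applies uniformly to $\mathscr{F}_s, \mathscr{F}_t, \mathscr{F}_{ts}, \mathscr{F}_{cf}, \mathscr{F}_{ps}, \mathscr{D}$ and any other family at once.
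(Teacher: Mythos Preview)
Your proof is correct and follows essentially the same route as the paper: use uniform continuity of $h^{-1}$ (available by compactness) to convert the sensitivity constant $\delta$ for $f$ into a constant $\eps$ for $g$, then use $g^n\circ h = h\circ f^n$ to obtain the inclusion $N_f(U,\delta)\subseteq N_g(h(U),\eps)$ and conclude via the upward-hereditary property of $\mathscr{F}$. The only cosmetic difference is that you exploit openness of $h$ to write every open $V\subset Y$ as $h(U)$, whereas the paper starts from a ball $V\subset Y$ and uses uniform continuity of $h$ to produce a smaller ball $U\subset X$ with $h(U)\subset V$; both lead to the same inclusion and the same conclusion.
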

\begin{proof}
	Let $f$ be $\F-$sensitive with sensitivity constant $\delta$. It is sufficient to take open balls instead of open sets because any open set contains open ball. Let $V=B_{\epsilon}(y)$ for some $y\in Y$ and $\epsilon>0$. Since $f$ is onto map, it follows that there exists $x\in X$ such that $h(x)=y$ and $h^{-1}(y)=x$. By uniform continuity of $h$ there exists $\theta>0$ such that $d(p,q)<\theta \implies d(h(p),h(q))<\epsilon$. Let $U=B_{\theta}(x)$. Then $N_f(U,\theta)\in \F$. Let $n\in N_f(U,\theta)$. Then there exists $x_1,x_2\in U$ such that 
	\begin{equation}\label{new}
		d(f^n(x_1),f^n(x_2))>\delta. 
	\end{equation}
	Let $h(x_1)=y_1$ and $h(x_2)=y_2$ then we have $h^{-1}(y_1)=x_1$ and $h^{-1}(y_2)=x_2$. Clearly, $y_1,y_2\in V$. By uniform continuity of $h^{-1}$ there exists $\eta>0$ such that $\rho(h^{-1}(s),h^{-1}(t))\geq \delta \implies \rho(s,t)\geq \eta$. By inequality \ref{new} and conjugacy, it is easy to prove that 
	$\rho(g^n(y_1),g^n(y_2))>\eta$. That is $n\in N_g(V,\eta)$. This implies that $N_f(U,\delta)\subset N_g(V,\eta)$. Therefore $N_g(V,\eta)\in \F$. Hence $g$ is $\F-$sensitive with sensitivity contant $\eta$.  
\end{proof}

\noindent Let $(X, f)$ be a dynamical systems. Then map $f$  is said to be \emph{transitive} if for any two non--empty open $U$ and $V$ of $X$, $N_f(U,V)\neq \phi$. In fact, it can be observed that $N_f(U,V)$ is an infinite set. Therefore, it is possible to define the stronger forms of transitivity. We recall the definition.

\smallskip	
\begin{definition}
	Let $(X, f)$ be a dynamical system. For a family $\mathscr{F}$ on $\N$, map $f$ is called \emph{$\mathscr{F}-$transitive} if $N_f(U,V)\in \mathscr{F}$. 
\end{definition}

\smallskip
\noindent Map $f$ is said to be a \textit{mixing} if $N_f(U,V)$ is a co--finite set, for any non---empty open subsets $U, V$ of $X$. Note that $f$ is a mixing is equivalent to $f$ is $\F_{cf}-$transitive. Next, map $f$ is a \emph{weakly mixing map}  if $f\times f$ is transitive. Therefore, weak mixing of $f$ is equivalent to $f$ is $\F_t-$transitive. A map $f$ is \emph{totally transitive} if $f^n$ is transitive for all $n\in \mathbb{N}$.  The notion of Devaney chaos was first introduced in \cite{RL}. We recall the definition.

\smallskip
\begin{definition}
	Let $(X,f)$ be a dynamical system. 	A map $f$ is said to be \emph{chaotic in the sense of Devaney or Devaney chaotic} if it is transitive, sensitive and $P(f)$ is dense in $X$.
\end{definition}

\smallskip
\noindent Fix $\epsilon,\; \delta>0$. A sequence $\{x_i:i\geq 0\}$ is a \emph{$\delta-$pseudo orbit} for map $f$ if $d(f(x_i),x_{i+1})<\delta$, for all $i\geq 0$. A point $x\in X$ is said to $\epsilon-$shadow (or traces) a $\delta-$pseudo orbit $\{x_i:i\geq 0\}$ if $d(f^i(x),x_i)<\epsilon$. Points $x,y\in X$ are \emph{chain related} if for every 
$\delta >0$ there are finite $\delta$-pseudo orbits from $x$ to $y$ and from $y$ to $x$. We say $x\in X$ is a \emph{chain recurrent} point for $f$ if $x$ is chain related to $x$. The set of all chain recurrent points is denoted by $CR(f)$. The map $f$ is \emph{chain recurrent} if $CR(f)=X$. The map $f$ is called \emph{chain transitive} if any two points of $X$ are chain related.

\subsection{Shift Space}
\noindent Let $\Sigma=\{x=(x_0x_1\dots):x_i =0 \text{ or } 1,\; i\geq 0\}$. Then $\Sigma$ is a metric space with the metric 

$$d\left(x, y\right)=\left \{\begin{array}{lll} 0, & \textit{ if } x=y \\ 
	2^{-J(x,y)}, & \textit{if } x\neq y \end{array} \right.$$ 
where $J(x,y)=\textit{min}\{i\in \N_0:x_i\neq y_i\}$. A \emph{shift map} is a function $\sigma : \Sigma \longrightarrow \Sigma$ given by $\sigma(x_0x_1x_2\dots)=(x_1x_2x_3\dots)$. A \emph{subshift} is the restriction of $\sigma$ to any closed non--empty subset $X$ of $\Sigma$ that is invariant under $\sigma$. A $k-$word in $x=(x_0x_1x_2\dots)$ is a finite sequence of the form $x_ix_{i+1}\dots x_{i+k-1}$ for some $i\in \N_0$. The language $L(X)$ of a subshift $X$ is the set of all words that appear in some element of $X$ together with empty word $\epsilon$. For an $(m+1)-$word $u=u_0u_1\dots u_m$, the set $C[u]=\{x\in X:x_i=u_i \mbox{ for } 0\leq i\leq m\}$ is called the \emph{cylinder set}. In the following we give the characterization of subshift to be $\F_s-$transitive.

\begin{proposition}\label{p1}
	Let $(X,\sigma)$ be a subshift space. Then $\sigma$ is $\F_s-$transitive on $X$ if and only if the set $\{|w|:uwv\in L(X)\}\in \F_s$ for all $u,v\in L(X)$.
\end{proposition}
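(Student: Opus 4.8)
The plan is to reduce everything to cylinder sets. The nonempty cylinders $C[u]$, $u\in L(X)$, form a basis for the topology of the subshift $X$, and $\mathscr{F}_s$ is upward hereditary, so $\sigma$ is $\mathscr{F}_s$-transitive if and only if $N_\sigma(C[u],C[v])\in\mathscr{F}_s$ for all $u,v\in L(X)$. One direction is immediate; for the converse, given nonempty open sets $U\supseteq C[u]$ and $V\supseteq C[v]$ one uses $N_\sigma(U,V)\supseteq N_\sigma(C[u],C[v])$ together with upward heredity of $\mathscr{F}_s$.

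The core is a bookkeeping identity. Put $m+1=|u|$, so that on any $x\in C[u]$ the block $u$ occupies coordinates $0,\dots,m$. I claim that for every integer $n\ge m+1$ one has $n\in N_\sigma(C[u],C[v])$ if and only if there is a word $w$ with $|w|=n-(m+1)$ and $uwv\in L(X)$. Indeed, $n\in N_\sigma(C[u],C[v])$ means there is $x\in X$ with $x_0\cdots x_m=u$ and $x_n\cdots x_{n+|v|-1}=v$; since $n>m$ these two blocks are disjoint, and $x_0\cdots x_{n+|v|-1}$ has the form $uwv$ with $w=x_{m+1}\cdots x_{n-1}$. Conversely, if $uwv\in L(X)$ with $|w|=\ell$, then, $X$ being shift invariant, some $x\in X$ has $uwv$ as its initial block, so $x\in C[u]$ and $\sigma^{m+1+\ell}(x)\in C[v]$. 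Writing $S=\{|w|:uwv\in L(X)\}$, this gives
\[
N_\sigma(C[u],C[v]) = \big((m+1)+S\big)\cup F,
\]
where $F\subseteq\{1,\dots,m\}$ is the finite set of \emph{overlap} values $n\le m$ for which $u$ and a shifted copy of $v$ can coexist in an element of $X$.

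Finally I record that syndeticity (equivalently, having bounded gaps) is preserved under translating a subset of $\mathbb{N}_0$ by a fixed integer and under adjoining or deleting finitely many points: in each case a gap bound $M$ is replaced by a bound depending only on $M$ and the modification. Hence $N_\sigma(C[u],C[v])$ is syndetic if and only if $(m+1)+S$ is, if and only if $S$ is; and $\sigma$ is $\mathscr{F}_s$-transitive exactly when this holds for every $u,v\in L(X)$, which is the asserted equivalence. The one mildly delicate point is the index bookkeeping of the second step---pinning down which coordinates $u$ and $v$ occupy, and checking that the exceptional set $F$ is finite and disjoint from $(m+1)+S$ so that it is harmless; the first and third steps are routine once the reductions are set up.
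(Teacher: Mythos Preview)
Your argument is correct and follows essentially the same route as the paper: reduce to cylinder sets via the basis and upward heredity of $\mathscr{F}_s$, identify $N_\sigma(C[u],C[v])$ with a translate of $\{|w|:uwv\in L(X)\}$ up to a finite set, and use that syndeticity is unaffected by translations and finite modifications. Your bookkeeping is in fact a bit more careful than the paper's (you track the overlap set $F$ explicitly and get the translation constant $m+1=|u|$ right), but the strategy is identical.
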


\begin{proof}
	Suppose $\sigma$ is $\F_s-$transitive on $X$. Let $u=u_0u_1\dots u_m$ and $v=v_0v_1 \dots v_k \in L(X)$. Consider the cylinder sets $C[u]=\{x\in X:x_i=u_i \text{ for } 0\leq i\leq m\}$ and $C[v]=\{x\in X:x_i=v_i \text{ for } 0\leq i\leq k\}$. Then $N_{\sigma}(C[u],C[v])\in \F_s$. For any $n>m$ and $n\in N_{\sigma}(C[u],C[v])$ there exists $w\in L(X)$ such that $|w|=n-m$ and $uwv\in L(X)$. Let $A=N_{\sigma}(C[u],C[v])\setminus \{1,2,\dots,m\}$. Then $A\in \F_s$ and therefore $\{|w|:uwv\in L(X)\}=A-m=\{a-m:a\in A\} \in \F_s$.
	
	\medskip
	\noindent Conversely, suppose that $\{|w|:uwv\in L(X)\}\in \F_s$ for any $u,v\in L(X)$. Let $U$ and $V$ be any non--empty subsets of $X$. Then there exists $u=u_0u_1\dots u_{m-1}$ and $v=v_0v_1\dots v_k \in L(X)$ such that $C[u]\subset U$ and $C[v]\subset V$. It is sufficient to show that $N_{\sigma}(C[u],C[v])\in \F_s$ because $N_{\sigma}(C[u],C[v])\subset N_{\sigma}(U,V)$. By assumption, the set $A=\{|w|:uwv\in L(X)\}\in \F_s$. For any $n\in A$, there exists $w\in L(X)$ such that $uwv\in L(X)$. Therefore, $m+n\in N_{\sigma}(C[u],C[v])$. Hence, $m+A\subset  N_{\sigma}(C[u],C[v])$. This implies that $N_{\sigma}(C[u],C[v])\in \F_s$. This completes the proof.
\end{proof}

\noindent By similar arguments used in the proof of Proposition \ref{p1} we can prove that $\sigma$ is $\F_{ts}-$transitive on $X$ if and only if the set $\{|w|:uwv\in L(X)\}\in \F_{ts}$, for all $u,v\in L(X)$.  

\medskip
\noindent The notion of spacing shift was defined and studied in \cite{Banks}. We recall the definition. For any $P\subset \N$, the set $\Sigma_P=\left\{s\in \Sigma:s_i=s_j=1\implies |i-j|\in P\cup \{0\}\right\}$ is a subshift of $\Sigma$. The subshift defined in this way is called a \emph{spacing shift}. Characterization of $\sigma$ to be mixing, weak mixing on $\Sigma_P$ in terms of $P$ was obtained in \cite{Banks}.

\smallskip
\begin{lemma}\cite{Banks} \label{T00}
	For a subset $P$ of $\N$, consider the spacing shift $\Sigma_P$. Then the following holds:\\
	1. $\Sigma_P$ mixing $\iff$ $P$ is co--finite.\\
	2. $\Sigma_P$ weakly mixing $\iff$ $P$ is thick.\\
	3. For $P\neq \phi$, $\Sigma_P$ has a dense set of periodic points if and only if for every $p\in P$ there exists $k\in \N$ such that $k\N \cup (k\N+p) \cup (k\N-p)\subseteq P$. 
\end{lemma}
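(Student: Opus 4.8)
\noindent The plan is to reduce all three statements to combinatorics of the language $L(\Sigma_P)$. A finite $\{0,1\}$--word belongs to $L(\Sigma_P)$ exactly when every distance between two of its $1$'s lies in $P$, and, this being a purely local condition, the same description governs which one--sided sequences belong to $\Sigma_P$; in particular each nonempty open set contains a cylinder $C[w]$ with $w\in L(\Sigma_P)$. Two remarks will do most of the work. First, if $u,v\in L(\Sigma_P)$ and $|u|=m$, then for any $n\ge m$ the sequence obtained by writing $u$, then $n-m$ zeros, then $v$, then zeros belongs to $\Sigma_P$ as soon as $n+d\in P$ for every $d$ in the finite set $D_{u,v}=\{\,j-i:\ u_i=1,\ v_j=1\,\}$, so all such $n$ lie in $N_\sigma(C[u],C[v])$. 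Second, taking for $u=v$ the one--letter word $1$, so that $C[1]=\{x\in X:x_0=1\}$, a point of $\Sigma_P$ has $1$'s at positions $0$ and $n$ precisely when $n\in P$; hence $N_\sigma(C[1],C[1])=P$.

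\noindent With this, (1) and (2) are short, using the equivalences recalled above that $\sigma$ is mixing iff $\sigma$ is $\F_{cf}$-transitive and $\sigma$ is weakly mixing iff $\sigma$ is $\F_t$-transitive. If $P$ is co--finite, say $\N\setminus P\subseteq\{1,\dots,K\}$, then for all $u,v\in L(\Sigma_P)$ every $n\ge m+K$ belongs to $N_\sigma(C[u],C[v])$, since all of the integers $n+d$ then exceed $K$; thus $N_\sigma(U,V)$ is co--finite for all nonempty open $U,V$, i.e.\ $\sigma$ is mixing. Conversely, mixing makes $N_\sigma(C[1],C[1])=P$ co--finite. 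For (2), if $P$ is thick then, given $L\in\N$, a block of $P$ of length $L+2\max\{|d|:d\in D_{u,v}\}$ lying far enough out produces, by the first remark, $L$ consecutive integers inside $N_\sigma(C[u],C[v])$; hence $N_\sigma(U,V)$ is thick for all nonempty open $U,V$, i.e.\ $\sigma$ is weakly mixing. Conversely, weak mixing makes $N_\sigma(C[1],C[1])=P$ thick.

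\noindent For (3) the central computation is that the $k$--periodic sequence $t_{k,B}$ whose $1$'s are exactly the positions congruent modulo $k$ to some element of a fixed nonempty $B\subseteq\{0,\dots,k-1\}$ has, as its set of distances between two $1$'s,
\[
k\N\ \cup\ \bigcup_{\substack{b,b'\in B\\ b>b'}}\Big(\big((b-b')+k\N_0\big)\ \cup\ \big((k-(b-b'))+k\N_0\big)\Big),\qquad \N_0=\N\cup\{0\},
\]
so that $t_{k,B}\in\Sigma_P$ iff each set displayed here lies in $P$. For ``$\Leftarrow$'', let $w\in L(\Sigma_P)$; if $w$ has no $1$ then $000\cdots\in C[w]$, so assume the $1$'s of $w$ occupy positions $B\subseteq\{0,\dots,|w|-1\}$ and put $D=\{\,b-b':b,b'\in B,\ b>b'\,\}\subseteq P$. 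Choose $k_d$ with $k_d\N\cup(k_d\N+d)\cup(k_d\N-d)\subseteq P$ for each $d\in D$ (and, if $D=\emptyset$, choose $k_0$ with $k_0\N\subseteq P$, which exists as $P\neq\emptyset$), and let $k$ be a common multiple of these $k_d$'s with $k\ge|w|$. Then $t_{k,B}$ begins with $w$, and as $k$ is a multiple of each $k_d$ and each $d\in P$, every set in the display lies in $P$; hence $t_{k,B}\in\Sigma_P\cap C[w]$. For ``$\Rightarrow$'', fix $p\in P$; then $w=10^{p-1}1\in L(\Sigma_P)$, so $C[w]$ contains a $\sigma$--periodic point $t$, whose prime period $q$ satisfies $q\ge p$ (otherwise $t_q=t_0=1$ while $0<q<p$ forces $t_q=0$); taking $k=q$ or $k=2q$ we get a period $k>p$. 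Since $t_0=1$ the $1$'s of $t$ include all of $k\N_0$, and since $t_p=1$ they include all of $p+k\N_0$; as every distance between two $1$'s of $t$ lies in $P$, reading off the distances among these $1$'s yields $k\N\subseteq P$, $k\N+p\subseteq P$ and $k\N-p\subseteq P$, which is the asserted condition.

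\noindent I expect the one genuine difficulty to be the bookkeeping in (3): determining exactly which positive integers occur as gaps between two $1$'s of a periodic binary sequence supported on a union of residue classes, and --- in the ``$\Leftarrow$'' direction --- producing a single period $k$ that simultaneously accommodates every gap arising in an arbitrary admissible initial word. This is precisely where one passes to a common multiple of the $k_d$'s and exploits the closure of $P$ under the three prescribed operations. Parts (1) and (2), by contrast, are immediate once $N_\sigma(C[1],C[1])=P$ and the zero--padding construction are in hand.
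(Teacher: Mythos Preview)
The paper does not prove this lemma at all: it is quoted verbatim from \cite{Banks} and used as a black box, so there is no in-paper argument to compare yours against. Your reconstruction is correct and is essentially the standard one from the cited source --- the identity $N_\sigma(C[1],C[1])=P$ together with the zero-padding construction dispatches (1) and (2), and your residue-class analysis for (3), including the passage to a common multiple of the $k_d$'s, is exactly the right bookkeeping.

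One cosmetic remark on (3)``$\Rightarrow$'': you do not actually need the case split ``$k=q$ or $k=2q$'' to force $k>p$. It suffices to take $k$ to be \emph{any} period of $t$ strictly exceeding $p$ (e.g.\ $k=2q$ in all cases), since nothing in the subsequent distance computation uses primality of the period; this slightly streamlines the exposition.
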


\noindent In the following example we give characterization on a subset $P$ of $\N$ for $\sigma$ on $\Sigma_P$ to be $\F_s-$transitive.

\begin{example}\label{eg1} Let $P$ be a subset of $\N$ and $\Sigma_P$ be the corresponding spacing shift. Then $\sigma$ on $\Sigma_P$ is $\F_s-$transitive if $P\in \F_{ts}$. Conversely, if $\sigma$ on $\Sigma_P$ is $\F_s-$transitive then $P\in \F_s$.
\end{example}

\begin{proof}
	Let $u,v\in L(\Sigma_P)$ such that $u=x_0x_1\dots x_{m-1}$ and $v=y_0y_1\dots y_{n-1}$. Since $P\in \F_{ts}$ it follows that $P$ has block of length $m+n$. Let $P_1$ be the set of positions where blocks of length $m+n$ begin in $P$. Then $P_1\in \F_s$. Put $z=u0^kv$, where $k\in P_1$. Let $i,j\in \N$ with $i\leq j$. Then the following three possibilities: 
	\begin{enumerate}
		\item If $j\leq m-1$,  then $z_i=x_i$ and $z_j=x_j$. Therefore $j-i\in P$. 
		\item If $i\geq m+k$ then $z_i=y_{i-(k+m)}$ and $z_j=y_{j-(k+m)}$. Therefore $j-i\in P$. 
		\item If $i\leq m-1$ and $j\geq k+m$ then $k\leq j-i\leq k+m+n-1$. Hence $j-i\in P$. Thus, $\{|w|:uwv\in L(\Sigma_P)\}=P_1\in \F_s$ by Proposition \ref{p1}. 
	\end{enumerate}

	\noindent Conversely, suppose $\sigma$ is $\F_s-$transitive then it is transitive and hence $P$ is infinite \cite{Banks}. In particular, for $u=v=1$, put $P'=\{|w|:1w1\in L(\Sigma_P)\}\in \F_s$. Then $P'=\{|w|:|w|+1\in P\}=P-1$. Since $P'\in \F_s$, it follows that $P=P'+1\in \F_s$. This completes the proof.
\end{proof}

\noindent  By similar arguments used in the Example \ref{eg1} we can prove that if $\sigma$ on $\Sigma_P$ is $\F_{ts}-$transitive then $P\in \F_{ts}$.

\medskip
\section{$\mathscr{F}-$Devaney's Chaos}

\medskip
\noindent Recall, a continuous map on a metric space $X$ is Devaney chaotic if $f$ is transitive,  sensitive and $P(f)$ is dense in $X$. In the following we define stronger forms of Devaney's chaos using stronger forms of sensitivity and transitivity. 

\begin{definition}
	Let $(X, f)$ be a dynamical system and let $\F$ be a family on $\N_0$. Then map $f$ is said to be \emph{$\F-$Devaney chaotic} if $f$ is $\F-$transitive, $\F-$sensitive and $P(f)$ is dense in $X$.
\end{definition}

\smallskip
\noindent If $\F=\FS$, then $f$ is said to be \emph{syndetically Devaney chaotic}, if $\F=\F_t$, then $f$ is said to be \emph{thickly Devaney chaotic}, if $\F=\F_{ts}$, then $f$ is said to be \emph{thickly syndetic Devaney chaotic}, if $\F=\F_{cf}$, then $f$ is said to be \emph{co--finite Devaney chaotic}.

\medskip
\noindent  In \cite{Bank}, Banks \textit{et al}, proved that if $f$ is a transitive map defined on an infinite metric space $X$ without isolated points and $P(f)$ is dense in $X$, then $f$ is sensitive and hence Devaney chaotic. We prove similar result for $\F-$Devaney chaos.

\begin{theorem} \label{T35}
	Let  $X$ be an infinite metric space without isolated points and $f: X \longrightarrow X$ be a uniformly continuous map. Suppose $f$ is $\F-$transitive and $P(f)$ is dense in $X$.  Then $f$ is $\F-$sensitive and hence $\F-$Devaney chaotic, where $\F=\F_{s}$, $\F_t$, $\F_{ts}$ or $\F_{cf}$. 
\end{theorem}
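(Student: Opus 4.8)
The plan is to mimic the classical Banks \emph{et al.} argument for sensitivity, but to keep careful track of the family structure so that the resulting set of ``separation times'' lands in $\F$. First I would establish the classical geometric setup: since $X$ is infinite without isolated points, one can find two periodic points $p_1, p_2 \in P(f)$ with disjoint orbits, and set $\delta_0$ to be, say, one eighth of the distance between the two orbits; then every point $x\in X$ has some orbit point $q(x)\in\{p_1,p_2\}$ with $d(x,q(x))\ge 4\delta_0$. Let $N$ be a common period of $p_1,p_2$ and let $\eta>0$ be a modulus of uniform continuity for $f,f^2,\dots,f^{N}$ at scale $\delta_0$, i.e. $d(a,b)<\eta\implies d(f^j(a),f^j(b))<\delta_0$ for $0\le j\le N$. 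I expect $\delta=\delta_0$ (or a fixed fraction of it) to be the $\F$-sensitivity constant.

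Next I would fix a nonempty open set $U$ and, using $\F$-transitivity together with density of $P(f)$, locate a periodic point $z\in U$ of some period $m$ and a small ball $B=B(z,\eta')\subset U$ with $\eta'<\eta$; the key point is that for the right choice of open set $W=B\big(f^{k}(q),\,\eta\big)$ near an iterate of the relevant orbit point $q=q(z)$, the transitivity hitting-time set $N_f(B,W)\in\F$. For each hitting time $n$ in that set there is a point $y\in B\subset U$ whose orbit visits $W$ at time $n$; combining the periodicity of $z$, the estimate $d(z,q)\ge 4\delta_0$, the fact that $f^n(y)$ is within $\eta$ of $f^{k}(q)$, and the uniform-continuity modulus $\eta$, one gets by the triangle inequality that at \emph{some} time in the window $\{n,n+1,\dots,n+N\}$ (chosen so that $f^{\cdot}(z)$ returns near $z$ while the orbit point $f^{\cdot}(q)$ is pinned down) the two points $z$ and $y$ of $U$ are separated by more than $\delta_0$. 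Thus $N_f(U,\delta_0)$ contains a ``thickened'' shift of the transitivity set $N_f(B,W)$.

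The family bookkeeping is then the crux, and I expect it to be the main obstacle — it is exactly the place where the four cases $\F=\F_s,\F_t,\F_{ts},\F_{cf}$ must be handled separately, just as in Lemmas \ref{T6} and \ref{fnf}. Concretely, I would argue: for $\F_{cf}$, a co-finite hitting set shifted by a bounded amount and unioned over a window of length $N$ is still co-finite; for $\F_s$, a syndetic set stays syndetic under a bounded shift and the window only shrinks gaps, so the image set is syndetic; for $\F_t$, a thick hitting set contains arbitrarily long blocks, and passing through the window and the fixed shift preserves the existence of arbitrarily long blocks; for $\F_{ts}$, one combines the $\F_s$ and $\F_t$ arguments exactly as in Case 3 of the two lemmas above, checking that for each $L$ the set of starting positions of length-$L$ blocks remains syndetic. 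In each case the conclusion is $N_f(U,\delta_0)\supseteq(\text{something in }\F)$, hence $N_f(U,\delta_0)\in\F$ by upward heredity of families, so $f$ is $\F$-sensitive with constant $\delta_0$.

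Finally, $\F$-sensitivity together with the hypotheses ($\F$-transitivity and $P(f)$ dense) gives $\F$-Devaney chaos by definition, completing the proof. I would remark that uniform continuity is used only to produce the single modulus $\eta$ controlling $f,\dots,f^{N}$ simultaneously; on a compact $X$ it is automatic. The one subtlety to be careful about is choosing the target open set $W$ so that the transitivity time $n$ genuinely forces separation at a \emph{controlled} later time independent of $n$ — this is what makes the shift bounded and thus keeps us inside $\F$ — and this is precisely the step where the argument could go wrong if the window length were allowed to depend on $n$.
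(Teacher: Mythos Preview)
Your overall strategy --- adapt the Banks \emph{et al.} argument and then do family bookkeeping case by case --- matches the paper's. There is, however, a genuine gap in the geometric step. You take the window length to be $N$, the common period of the two distant periodic points $p_1,p_2$, but the argument actually requires a window of length $m$, the period of the \emph{local} periodic point $z\in U$. To force separation you need a time $t$ in the window with $f^{t}(z)=z$, i.e.\ $m\mid t$; a window of length $N$ contains no multiple of $m$ once $m>N+1$, and $m$ can be arbitrarily large since it depends on $U$. Your single-ball target $W=B(f^{k}(q),\eta)$ together with the uniform-continuity modulus for $f,\dots,f^{N}$ only pins down $f^{n+j}(y)$ for $0\le j\le N$, which is not enough to catch a return time of $z$.

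The paper handles this by making the target set depend on the local period: with $y\in U$ periodic of period $n$ it takes $V=\bigcap_{i=0}^{n}f^{-i}\bigl(B(f^{i}(q),\eta)\bigr)$, so that any hitting time $m\in N_f(U,V)$ gives $f^{m+i}(t)\in B(f^{i}(q),\eta)$ for every $0\le i\le n$, and one then chooses $j$ with $1\le nj-m\le n$ to obtain $f^{nj}(y)=y$ while $f^{nj}(t)$ lies near the far orbit. Your worry in the last paragraph is therefore slightly misplaced: the window length \emph{does} depend on $U$ (through the local period), but this is harmless --- for each fixed $U$ the shift is bounded, which is all the family bookkeeping needs, and the sensitivity constant $\eta=\delta/8$ remains uniform across all $U$. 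After this the paper records the conclusion as $N_{f^{n}}(W,\eta)\in\F$ and passes back to $f$ via Lemma~\ref{fnf}, whereas you aim for $N_f(U,\delta_0)\in\F$ directly; either packaging works once the window is taken of the correct, $U$-dependent length and the target $V$ is the intersection above rather than a single ball.
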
	

\begin{proof}
	In view of Lemma \ref{fnf}, it is sufficient to prove that $f^p$ is $\F-$sensitive for some $p \in \N$. Since $P(f)$ is dense in $X$ and $X$ is infinite, it follows that $P(f)$ is infinite. Let $u$ and $v$ be two periodic points such that $O_f(u)\cap O_f(v)=\phi$ and $8\delta =min\left\{d\left(f^m(u), f^n(v)\right) \; : \; m, n \geq 0 \right\}$. Then $\delta > 0$ and for each $t\in X$, $d\left(t, f^k(q)\right)\geq\delta/2$, for all $k\geq 0$. Here either $q=u$ or $q=v$.
	
	\medskip
	\noindent	Let $x \in X$ and $W$ be a neighbourhood of $x$. Put $U=W\cap B(x,\delta)$. Since $P(f)$ is dense in $X$, it follows that there is a periodic point $y$, say of period $n$, such that $y \in U$. Set $\eta = \delta/8$. Then we show that $N_{f^n}(W,\eta)=\{k\in \N: \mbox{there exists $u$, $v\in W$ with } d\left((f^n)^k(u),(f^n)^k(v)\right)>\eta\}$ contains an element of $\F$. 
	
	\smallskip
	\noindent For $x\in X$, let $q$ be a periodic point such that $d(x,f^k(q))\geq \frac{\delta}{2}$, for all $k\geq 0$. Set $$V=\bigcap_{i=0}^{n}f^{-i}\left(B(f^i(q),\eta)\right).$$ Then $V$ is a non--empty open subset of $X$ by the choice of $\eta$. 
	
	\medskip
	\noindent Since $f$ is $\mathscr{F}-$transitive, for this $U,V$, it follows that $N_f(U,V)\in \mathscr{F}$. Therefore for each $m\in N_f(U,V)$ there exists $t\in U$ such that $f^m(t)\in V$. Put 
	$$A=\left\{j\in \N:1\leq nj-m\leq n, \; m\in N_f(U,V)\right\}.$$ 
	Then $A$ is infinite as $N_f(U,V)$ is infinite. Let $j\in A$. Then $1\leq nj-m\leq n$. Further,
	$$f^{nj}(t)\in f^{nj-m}(V)\subseteq B(f^{nj-m}(q),\eta).$$
	Also, $f^{nj}(y)=y$ implies $ d(f^{nj}(t),f^{nj}(y))>2\eta $ as $y\in B(x,\eta)$ and $f^{nj}(t)\in B(f^{nj-m}(q),\eta)$. This further implies $d(f^{nj}(y),f^{nj}(x))>\eta$ or $d(f^{nj}(x),f^{nj}(t))>\eta$. Therefore $j\in N_{f^n}(W,\eta)$ and hence $A\subset N_{f^n}(W,\eta)$. We complete the proof by showing that $A\in \F$. Consider the following five cases.
	
	\medskip
	\noindent \textbf{Case 1:} $\F=\FS$
	
	\noindent Here $N_f(U,V)\in \F_s$. With the similar arguments in Case 1 of Lemma \ref{T6} one can show that $A\in \F_s$.  
	
	\medskip
	\noindent \textbf{Case 2:} $\F=\F_t$
	
	\noindent Let $k\in \N$. Since $N_f(U,V)\in \F_t$, it follows that there is a block of length $kn$ in $N_f(U,V)$. Therefore, there exists $a\in N_f(U,V)$ satisfying $$\{a,a+1,\cdots, a+n, \cdots, a+2n,\cdots , a+kn\}\subset N_f(U,V).$$
	Further, there exists $r\in \mathbb{N}$ such that $\frac{\displaystyle{a}}{\displaystyle{n}}< r \leq \frac{\displaystyle{a}}{\displaystyle{n}}+1.$ But this implies $r\in A$. Next, $\frac{\displaystyle{a+n}}{\displaystyle{n}}=\frac{\displaystyle{a}}{\displaystyle{n}}+1$ and $\frac{\displaystyle{a+n}}{\displaystyle{n}}+1=\frac{\displaystyle{a}}{\displaystyle{n}}+2$ implies
	$\frac{\displaystyle{a}}{\displaystyle{n}}+1 < r+1 \leq \frac{\displaystyle{a}}{\displaystyle{n}}+2$, which further implies $r+1\in A$. For $0\leq i<k$, $r+i\in A$. Hence $\{r,r+1,\dots,r+k-1\}\subset A$. But $k\in \N$ is arbitrary. Thus, $A\in \F_t$.
	
	\medskip
	\noindent \textbf{Case 3:} $\F=\F_{ts}$
	
	\noindent Here $N_f(U,V)\in \F_{ts}$. With the similar arguments in Case 3 of Lemma \ref{T6} one can show that $A\in \F_{ts}$.

	\medskip
	\noindent \textbf{Case 4:} $\F=\F_{cf}$ 
	
	\noindent By Case 2, for given $a+in\in N_f(U,V)$, $r+i\in B$. Since $N_f(U,V)\in \F_{cf}$ it follows that $A\in \F_{cf}$.
	
	\smallskip
	\noindent Thus in any case $A\in \F$. Therefore $N_{f^n}(U,\eta)\in \F$. Hence, $f$ is $\F-$sensitive and therefore $\F-$Devaney chaotic.
\end{proof}

\medskip
\noindent In the following  we give an Example of a map which is $\F_s-$transitive, $\F_s-$sensitive but the set of periodic points is not dense and therefore map is not $\F_s-$Devaney chaotic.

\smallskip
\begin{example}
	Consider the shift space $(\Sigma,\sigma)$. For a real number $r$ denote the fractional part of $r$ by \emph{frac$(r)$}. Let $\alpha\in (0,1)$ be an irrational number. Define an element $x^{\alpha}\in \Sigma$ as follows: 
	\[ x_n^{\alpha} = \left\{ \begin{array}{lll}
		0, & \mbox{if  $frac(n\alpha)\in[0,1-\alpha)$}\\
		1,  & \mbox{if  $frac(n\alpha)\in[1-\alpha,1)$}.\\
	\end{array}
	\right. \]
	\noindent Then the subset $X_{\alpha}=\overline{O_{\sigma}(x^{\alpha})}$ of $\Sigma$ together with the shift map $\sigma$ is called a \emph{Sturmian subshift}. Since $\sigma$ is minimal on  $X_{\alpha}$, it follows that it is $\F_s-$transitive and it does not have dense set of periodic points. Also, $\sigma$ is $\F_s-$sensitive \cite{TKS}. Therefore, $\sigma$ is not $\F_s-$Devaney chaotic.
\end{example}

\smallskip
\noindent In the following example we give characterization of a subset $P$ of $\N$ for the shift map $\sigma$ on $\Sigma_P$ to be $\F_{cf}-$Devaney chaotic.

\smallskip
\begin{example}
	Let $P$ be a co--finite subset of $\N$. Then the spacing shift $\Sigma_P$ is a mixing by Lemma \ref{T00} (1). Since $P$ is a co--finite set, for any $p\in P$ there exists $k\in \N$ such that $k\N\cup (k\N+p) \cup (k\N-p)\subseteq P$. Therefore by Lemma \ref{T00} (3), the set of periodic points of $\sigma$ is dense in $\Sigma_P$. Hence by Theorem \ref{T35}, $\sigma$ on $\Sigma_P$ is $\mathscr{F}_{cf}-$sensitive and hence $\sigma$ is $\mathscr{F}_{cf}-$Devaney chaotic. In view of Lemma \ref{T00} (2), converse is also true.
\end{example}

\smallskip
\noindent In the following we give example of a map which is $\F_t-$Devaney chaotic but not $\F_{cf}-$Devaney chaotic.
\begin{example}
	Let $P=\N\setminus \{2^n:n\in \N\}$. Then $P$ is a thick subset of $\N$. For any $p\in P$ putting $k=2p$ gives $k\N\cup (k\N+p) \cup (k\N-p)\subseteq P$. Since $P$ is thick, it follows that $\Sigma_P$ is weakly mixing. Also, periodic points are dense in $\Sigma_P$ by Lemma \ref{T00} $(3)$. Hence shift map is $\F_t-$Devaney chaotic. Note that $P$ is not co--finite and hence shift map is not $\F_{cf}-$Devaney chaotic on $\Sigma_P$.
\end{example}

\medskip
\noindent Let $(X,f)$ be a compact dynamical system. Then it is known that map $f$ is weakly mixing if and only if for any two non--empty open sets $U,V\subset X$, $N_f(U,V)$ is thick \cite{Huang}. Therefore weakly mixing is equivalent to $\F_t-$transitive. Further, if $f$ is weak mixing then it is $\mathscr{F}_t-$sensitive \cite{Liu}. Hence we have the following obvious consequence. 

\begin{proposition}
	Let $(X,f)$ be a compact dynamical system. If $f$ is weak mixing and $P(f)$ is dense in $X$, then $f$ is $\F_t-$Devaney chaotic. 
\end{proposition}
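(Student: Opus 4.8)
The plan is to assemble the conclusion from three ingredients that are all either hypotheses or previously recorded facts. First I would observe that, as the paragraph preceding the statement already notes, weak mixing of $f$ on a compact dynamical system is equivalent to $f$ being $\F_t$-transitive: by the cited result of Huang, $f$ weak mixing implies $N_f(U,V)$ is thick for all non--empty open $U,V\subset X$, and conversely $\F_t$-transitivity trivially gives transitivity of $f\times f$ on a basis of product open sets, hence weak mixing. So the transitivity half of the $\F_t$-Devaney chaos definition is immediate.

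Next I would invoke the cited result of Liu: a weakly mixing map on a compact dynamical system is $\F_t$-sensitive. (Here one should note in passing that $X$ infinite without isolated points is part of our standing definition of a dynamical system, so there is no degeneracy issue; a single point space, for instance, is excluded.) This supplies the $\F_t$-sensitivity half of the definition directly from the hypothesis that $f$ is weak mixing.

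Finally, density of $P(f)$ in $X$ is assumed outright. Combining the three facts --- $f$ is $\F_t$-transitive, $f$ is $\F_t$-sensitive, and $P(f)$ is dense --- and reading off the definition of $\F$-Devaney chaos with $\F=\F_t$, we conclude $f$ is $\F_t$-Devaney chaotic.

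The proof is essentially a bookkeeping exercise: there is no genuine obstacle, since both substantive inputs (weak mixing $\iff$ thickly transitive, and weak mixing $\Rightarrow$ $\F_t$-sensitive in the compact setting) are quoted from \cite{Huang} and \cite{Liu}. If anything, the only point deserving a line of care is making explicit the equivalence between ``weak mixing'' and ``$\F_t$-transitive'' on a compact space, so that the word ``transitive'' in the definition of $\F_t$-Devaney chaos is seen to be witnessed by a set in $\F_t$ rather than merely a non--empty one; but this is exactly what the preceding paragraph has already set up, so the write-up can simply cite it. One could alternatively phrase the whole argument as an application of Theorem \ref{T35} once one knows $f$ is $\F_t$-transitive with $P(f)$ dense, deducing $\F_t$-sensitivity from that theorem instead of from \cite{Liu}; I would mention this as a remark but keep the short direct proof as the main line.
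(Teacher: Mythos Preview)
Your proposal is correct and matches the paper's approach exactly: the paper presents this proposition as an ``obvious consequence'' of the preceding paragraph, which records precisely the two cited facts you invoke (weak mixing $\iff$ $\F_t$-transitive via \cite{Huang}, and weak mixing $\Rightarrow$ $\F_t$-sensitive via \cite{Liu}), together with the hypothesis that $P(f)$ is dense. Your remark about the alternative route through Theorem~\ref{T35} is a nice addition but not in the paper.
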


\medskip
\noindent In \cite{Liu}, it is shown that a Devaney chaotic map $f$ is thickly periodically sensitive and hence it is $\F_t-$sensitive. Therefore the following Theorem gives conditions for Devaney chaotic map to be $\F_t-$Devaney chaotic.

\smallskip		
\begin{proposition}\label{p3.7}
	Let $(X,f)$ be a compact dynamical system. Suppose $f$ is weak mixing. Then $f$ is Devaney chaotic map if and only if $f$ is $\F_t-$Devaney chaotic.
\end{proposition}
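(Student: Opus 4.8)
The plan is to split the biconditional into its two implications and assemble them from the facts recalled immediately above the statement; neither direction should require any real work beyond invoking those facts.

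First I would dispose of the easy implication: if $f$ is $\F_t$-Devaney chaotic, then $f$ is Devaney chaotic. Indeed, every thick subset of $\N$ is in particular nonempty, so $\F_t$-transitivity of $f$ forces $N_f(U,V)\neq\phi$ for all nonempty open $U,V$, i.e. $f$ is transitive; similarly $\F_t$-sensitivity (with some constant $\delta$) forces $N_f(U,\delta)\neq\phi$ for every nonempty open $U$, so $f$ is sensitive; and $P(f)$ is dense by hypothesis. Hence $f$ is Devaney chaotic. I would remark that this implication does not use weak mixing at all — it is the generic fact that $\F$-Devaney chaos implies Devaney chaos whenever $\F$ consists of nonempty sets.

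For the converse, suppose $f$ is Devaney chaotic. Then $P(f)$ is dense in $X$, and $f$ is weak mixing by hypothesis, so the preceding Proposition applies verbatim and gives that $f$ is $\F_t$-Devaney chaotic. Alternatively, and this is the route I would actually spell out, I would verify the three defining conditions directly: since $(X,f)$ is a compact dynamical system and $f$ is weak mixing, $N_f(U,V)$ is thick for all nonempty open $U,V$ \cite{Huang}, so $f$ is $\F_t$-transitive; since $f$ is Devaney chaotic, $f$ is thickly periodically sensitive and in particular $\F_t$-sensitive \cite{Liu}; and $P(f)$ is dense because $f$ is Devaney chaotic. Combining the two implications yields the stated equivalence.

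There is no substantive obstacle in this argument: all the dynamical content is already contained in the characterization of weak mixing on compact spaces \cite{Huang} and in the result of \cite{Liu} that Devaney chaos forces $\F_t$-sensitivity. The only point worth flagging explicitly in the write-up is that the weak-mixing hypothesis is needed solely for the direction ``Devaney chaotic $\Rightarrow$ $\F_t$-Devaney chaotic'' (to upgrade transitivity to $\F_t$-transitivity), while the reverse direction is automatic.
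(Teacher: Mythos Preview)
Your argument is correct and matches the paper's approach exactly: the paper does not give a separate proof of this proposition but presents it as an immediate consequence of the two facts recalled just before it, namely that weak mixing on a compact system is equivalent to $\F_t$-transitivity \cite{Huang} and that Devaney chaos implies $\F_t$-sensitivity \cite{Liu}. Your observation that the weak-mixing hypothesis is used only in the forward direction is also in line with the paper, which follows the proposition with Example~\ref{3.10} to show that this hypothesis is genuinely needed there.
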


\smallskip
\noindent Note that weak mixing is a necessary condition in Proposition \ref{p3.7} is justified by the following example.

\smallskip		
\begin{example}\label{3.10}
	Consider spacing shift $(\Sigma_P,\sigma)$ for $P=2\N$. Then by Lemma \ref{T00} it follows that $\sigma$ is transitive on $\Sigma_P$ and $P(f)$ is dense in $\Sigma_P$. Also, as observed above weak mixing is equivalent to $\F_t-$transitive. By Lemma \ref{T00} $\sigma$ on $\Sigma_P$ is weak mixing if and only if $P$ is thick. Here $P=2\N$ is not a thick set. Therefore $\sigma$ is not $\F_t-$transitive. Therefore $\sigma$ is Devaney chaotic but not $\F_t-$Devaney chaotic.
\end{example}

\smallskip
\noindent Recall, if $f$ is transitive map with a dense set of minimal points then it is $\F_s-$transitive. Further, if $f$ is $\F_s-$transitive but not minimal, then $f$ is $\F_s-$sensitive \cite{TKS}. Hence a transitive non--minimal map $f$ with a dense set of minimal points is $\F_s-$sensitive. Therefore we have the following consequence.

\begin{proposition}\label{3.9}
	Let $(X,f)$ be a compact dynamical system. Suppose $f$ is a non--minimal map. Then $f$ is Devaney chaotic if and only if $f$ is $\F_s-$Devaney chaotic. 
\end{proposition}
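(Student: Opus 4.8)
The plan is to derive both implications directly from the facts recalled immediately before the statement, so the proof is little more than bookkeeping. First I would handle the forward implication. Assume $f$ is Devaney chaotic, so $f$ is transitive, sensitive and $P(f)$ is dense in $X$. Since $P(f)\subset M(f)$, the set of minimal points is dense, and therefore $f$, being a transitive map with a dense set of minimal points, is $\F_s-$transitive. As $f$ is non--minimal by hypothesis, the result of \cite{TKS} recalled above shows that an $\F_s-$transitive non--minimal map is $\F_s-$sensitive. Combining this with the density of $P(f)$ we conclude that $f$ is $\F_s-$transitive, $\F_s-$sensitive and has a dense set of periodic points, i.e. $f$ is $\F_s-$Devaney chaotic.

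For the converse I would use that $\F_s-$transitivity and $\F_s-$sensitivity are strictly stronger than their unqualified versions: every syndetic subset of $\N$ has bounded gaps, hence is infinite and in particular nonempty. Thus $N_f(U,V)\in\F_s$ forces $N_f(U,V)\neq\phi$ for all nonempty open $U,V$, and $N_f(U,\delta)\in\F_s$ forces $N_f(U,\delta)\neq\phi$ for all nonempty open $U$. So if $f$ is $\F_s-$Devaney chaotic, then it is transitive, sensitive and $P(f)$ is dense, which is precisely Devaney chaos; note that the non--minimality hypothesis is not needed for this direction.

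The only step meriting any care --- and the one I expect to be the main (very mild) obstacle --- is checking that the invoked statements from \cite{TKS} genuinely apply to $(X,f)$ in the setting fixed in Section 2. Since $X$ is an infinite metric space without isolated points and $P(f)$ is dense, $P(f)$ is infinite, so $(X,f)$ is not a finite/periodic system and the hypotheses under which ``transitive with a dense set of minimal points $\Rightarrow$ $\F_s-$transitive'' and ``$\F_s-$transitive and non--minimal $\Rightarrow$ $\F_s-$sensitive'' are established are indeed met. With this in place the argument is immediate.
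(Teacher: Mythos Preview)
Your proposal is correct. The forward direction matches the paper's argument for obtaining $\F_s$-transitivity (density of $P(f)\subset M(f)$ plus transitivity), but you derive $\F_s$-sensitivity directly from the Moothathu result ``$\F_s$-transitive and non--minimal $\Rightarrow$ $\F_s$-sensitive'' that is recalled just before the proposition, whereas the paper instead invokes its own Theorem~\ref{T35} ($\F$-transitive with dense periodic points $\Rightarrow$ $\F$-sensitive) at that step. Both routes are valid; yours uses the non--minimality hypothesis explicitly and avoids appealing to the paper's Banks--type theorem, while the paper's choice showcases Theorem~\ref{T35} but makes the role of non--minimality invisible in the written proof. Your treatment of the (trivial) converse is also fine; the paper omits it entirely.
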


\begin{proof}
	Since every periodic point is a minimal point it follows that $M(f)$ is dense in $X$. Further, $f$ is transitive implies $f$ is $\F_s-$transitive. Therefore by Theorem \ref{T35}, $f$ is $\F_s-$Devaney chaotic. 
\end{proof}

\smallskip 
\noindent In the following we give an example of a map which $\F_s-$Devaney chaotic but is neither $\F_t-$Devaney chaotic nor $\F_{cf}-$Devaney chaotic. Note that this is also an example of a map which is Devaney chaotic but is neither $\F_t-$Devaney chaotic nor $\F_{cf}-$Devaney chaotic.

\smallskip		
\begin{example}\label{2.6} The map $S:[-1,1]\longrightarrow [-1,1]$, defined by 
	\begin{center}
		$S(x)=\left\{ \begin{array}{llc}
			2x+2 & \mbox{ if } x\in [-1,-\frac{1}{2}],\\ 
			-2x & \mbox{ if } x\in [-\frac{1}{2},0], \\
			-x & \mbox{ if } x\in [0,1]
		\end{array} \right.$
	\end{center}
	is transitive but not topological mixing. It is known that transitive map on interval have dense set of periodic points \cite{Michel}. So, the map defined above is Devaney chaotic and hence $\F_s-$Devaney chaotic but not $\F_t-$Devaney chaotic.
\end{example}

\smallskip
\begin{theorem}\label{3.10}
	Let $(X,f)$ be a compact dynamical system. Suppose $f$ is non--minimal weakly mixing map with $P(f)$ is dense in $X$. Then $f$ is $\F_{ts}-$Devaney chaotic.
\end{theorem}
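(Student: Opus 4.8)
The plan is to prove that $f$ is $\F_{ts}$-transitive; once this is established, Theorem~\ref{T35} does the rest, since $f$ is uniformly continuous (being continuous on a compact space), $X$ is infinite with no isolated points, and $P(f)$ is dense by hypothesis, so that theorem yields that $f$ is $\F_{ts}$-sensitive and hence $\F_{ts}$-Devaney chaotic. (Incidentally, the non--minimality hypothesis is automatically satisfied here: a minimal system carrying a periodic point is finite, whereas $X$ is infinite; so it need not be invoked.)

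To prove $\F_{ts}$-transitivity, fix nonempty open sets $U,V\subseteq X$ and $n\in\N$; by the definition of $\F_{ts}$ it suffices to show that the set of starting positions of length-$(n+1)$ blocks inside $N_f(U,V)$, namely $\{m\in\N : \{m,m+1,\dots,m+n\}\subseteq N_f(U,V)\}$, is syndetic. Using the elementary identity $N_f(U,V)-i=N_f(U,f^{-i}(V))$ (both sides equal $\{m : f^{m+i}(U)\cap V\neq\emptyset\}$), this block-position set equals $\bigcap_{i=0}^{n}N_f(U,f^{-i}(V))$. I would then recognise this intersection as a single return-time set for the $(n+1)$-fold product map $g:=f\times\cdots\times f$ on $X^{n+1}$: setting $\widetilde U=U\times\cdots\times U$ and $\widetilde V=V\times f^{-1}(V)\times\cdots\times f^{-n}(V)$, which are nonempty open sets in $X^{n+1}$ (each $f^{-i}(V)$ is nonempty because $f$, being transitive on a compact space, is surjective), one checks directly that $N_g(\widetilde U,\widetilde V)=\bigcap_{i=0}^{n}N_f(U,f^{-i}(V))$, since a product of sets meets $\widetilde V$ exactly when it does so coordinatewise, and the points witnessing the different coordinates may be chosen independently.

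It thus remains to show that $g$ is $\F_s$-transitive. Since $f$ is weakly mixing, it is well known that every finite power $g=f\times\cdots\times f$ is transitive. Moreover, if $p_0,\dots,p_n\in P(f)$ have prime periods $q_0,\dots,q_n$, then $(p_0,\dots,p_n)$ is a periodic point of $g$ of period $\mathrm{lcm}(q_0,\dots,q_n)$; as $P(f)$ is dense in $X$, the set $P(f)^{\,n+1}\subseteq P(g)$ is dense in $X^{n+1}$, and since periodic points are minimal points, $g$ is a transitive map with a dense set of minimal points, hence $\F_s$-transitive by the result recalled just before Proposition~\ref{3.9}. Consequently $N_g(\widetilde U,\widetilde V)$ is syndetic, so $N_f(U,V)\in\F_{ts}$; as $U,V$ were arbitrary, $f$ is $\F_{ts}$-transitive, and Theorem~\ref{T35} completes the proof.

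The only genuinely delicate step is the transfer to the product system: one must verify that density of $P(f)$ is inherited by $g$ (the $\mathrm{lcm}$ observation, together with the fact that periodic points are minimal) and that each $f^{-i}(V)$ is nonempty. Everything else is bookkeeping around the identity $N_f(U,V)-i=N_f(U,f^{-i}(V))$ and appeals to results already available; no analytic estimate enters.
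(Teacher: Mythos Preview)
Your argument is correct and complete. Both you and the paper reduce the problem to establishing $\F_{ts}$-transitivity and then invoke Theorem~\ref{T35}, but the routes to $\F_{ts}$-transitivity differ. The paper first observes that $M(f)$ is dense (since $P(f)\subset M(f)$), so that $f$ is $\F_s$-transitive by Moothathu's criterion, and then appeals to \cite[Theorem~4.7(1)]{Wen}: a weakly mixing map with $N_f(U,U)\in\F_s$ for all $U$ is automatically $\F_{ts}$-transitive. You instead unfold the definition of ``thickly syndetic'' directly, rewrite the set of block-starting positions as a return-time set for the $(n{+}1)$-fold product $g=f^{\times(n+1)}$, and apply Moothathu's criterion not to $f$ but to $g$ (which is transitive by weak mixing and has dense periodic---hence minimal---points since $P(f)^{n+1}\subset P(g)$). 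Your approach is more self-contained, effectively re-proving the needed instance of the Huang--Ye result from first principles; the paper's is shorter but leans on an external black box. Your remark that non-minimality is automatic (a minimal system with a periodic point is finite) is a nice side observation the paper does not make.
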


\begin{proof}
	Since $P(f)\subset M(f)$, it follows that $M(f)$ is dense in $X$. Also, non--minimal transitive map is $\F_s-$transitive\cite{TKS}. Then $f$ is $\F_{ts}-$transitive because $f$ is non--minimal weakly mixing map for which $N_f(U,U)\in \F_s$ \cite[Theorem 4.7 (1)]{Wen}. Thus, $f$ is $\F_{ts}-$Devaney chaotic by Theorem \ref{T35}.
\end{proof}

\section{$(\mathscr{F},\mathscr{G})-P-$chaos}

\medskip
\noindent Throughout this section $(X,f)$ is a compact dynamical system and we consider families of subsets of $\N_0$. The notion of $P-$chaos was first studied and defined in \cite{Arai}. A map $f$ is said to be \emph{$P-$chaotic} if $P(f)$ is dense in $X$ and $f$ has shadowing property. It is proved that every $P-$chaotic map $f$ defined on continuum is mixing and further, $f$ is Devaney chaotic. Also, if $f$ is mixing then it is $\F_{cf}-$sensitive \cite{TKS}. Therefore we have the following consequence.

\begin{proposition}\label{3.1}
	Every $P-$chaotic map defined on a continuum is $\F_{cf}-$Devaney chaotic.
\end{proposition}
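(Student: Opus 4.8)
The plan is to assemble the statement from ingredients already recorded above, so the argument is short. Let $f$ be a $P$-chaotic map on a continuum $X$. By the definition of $P$-chaos, $P(f)$ is dense in $X$ and $f$ has the shadowing property; this already supplies one of the three conditions required for $\F_{cf}$-Devaney chaos. The substantive input is the theorem of Arai and Chinen \cite{Arai}, quoted in the introduction, that such an $f$ is mixing; this is the only place where the hypothesis ``continuum'' (rather than merely ``compact metric space'') is used.

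With mixing in hand, I would convert it into the two remaining conditions. First, mixing of $f$ means $N_f(U,V)$ is co--finite for every pair of non--empty open sets $U,V\subset X$, which is precisely the assertion that $f$ is $\F_{cf}$-transitive (the remark following the definition of $\F$-transitivity). Second, a mixing map is $\F_{cf}$-sensitive by \cite{TKS}. Combining these two with density of $P(f)$ shows that $f$ is $\F_{cf}$-transitive, $\F_{cf}$-sensitive, and has a dense set of periodic points, i.e. $f$ is $\F_{cf}$-Devaney chaotic.

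I do not expect a genuine obstacle here: the one nontrivial fact, that $P$-chaos on a continuum forces mixing, is imported from \cite{Arai}, and the implication from mixing to $\F_{cf}$-sensitivity is imported from \cite{TKS}; the only remaining step is the definitional identification of ``mixing'' with ``$\F_{cf}$-transitive''. If one prefers to rely on results proved in this paper rather than on the external citation for sensitivity, an alternative is to note that a mixing map is in particular $\F_{cf}$-transitive, that $X$ compact makes $f$ uniformly continuous, and that $P(f)$ is dense, so Theorem \ref{T35} applies directly to give $\F_{cf}$-sensitivity and hence $\F_{cf}$-Devaney chaos. Either route yields the claim with no further computation.
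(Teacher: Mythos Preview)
Your proposal is correct and matches the paper's own argument essentially verbatim: the paper likewise invokes Arai--Chinen \cite{Arai} for mixing, identifies mixing with $\F_{cf}$-transitivity, and cites \cite{TKS} for $\F_{cf}$-sensitivity, presenting the proposition as an immediate consequence. Your alternative route through Theorem~\ref{T35} is also valid and equally short.
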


\smallskip
\noindent Fix $A\subset \mathbb{N}_0$ and $\delta$, $\epsilon$ $>0$. A sequence $\{x_i\}_{i=0}^{\infty}$ in $X$ is a \emph{$\delta-$pseudo orbit on $A$} if $A\subset \{i:d(f(x_i),x_{i+1})<\delta\}.$ If $A=\N_0$, then $\{x_i\}_{i=0}^{\infty}$ is a $\delta$-pseudo orbit in classical sense. A point $y\in X$ is said to \emph{$\epsilon-$shadows (or $\epsilon-$traces)} a sequence $\{y_i\}_{i=0}^{\infty}$ on $B$ if $B\subset \{i:d(f^i(y),y_i)<\epsilon\}$. If $A=\mathbb{N}_0$ then $\{y_i\}_{i=0}^{\infty}$ is $\epsilon-$shadowed (or $\epsilon-$traced) by a point $y$. The notion of $(\F,\G)-$shadowing property was defined in \cite{Oprocha}. We recall the definition.

\begin{definition}
	Consider families $\mathscr{F}$ and $\mathscr{G}$ of $\mathbb{N}_0$. A map $f$ is said to have \emph{$(\mathscr{F},\mathscr{G})-$shadowing property} if for every $\epsilon>0$ there is $\delta>0$ such that if $\{x_i\}_{i=0}^{\infty}$ is a $\delta-$pseudo orbit on a set $A$ in $\mathscr{F}$ then there is a point $x$ which $\epsilon-$shadows $\{x_i\}_{i=0}^{\infty}$ on a set $B$, for some $B\in \mathscr{G}$. 
\end{definition}

\smallskip
\noindent It was observed in \cite{Oprocha}, that in general there is no relation between $(\F,\G)-$shadowing and shadowing property. Note that if $\F=\N_0=\G$, then $(\F,\G)-$shadowing is usual shadowing of $f$. Replacing shadowing by $(\F,\G)-$shadowing in $P-$chaos we define $(\F,\G)-P-$chaos. 
\begin{definition} 
	Let $(X,f)$ be a dynamical system and let $\mathscr{F}$ and $\mathscr{G}$ be families of $\mathbb{N}_0$. A map $f$ is said to be \emph{$(\mathscr{F},\mathscr{G})-P-$chaotic} if it has $(\mathscr{F},\mathscr{G})-$shadowing and periodic points of $f$ are dense in $X$.
\end{definition}

\noindent In the following we give examples of a map which is $(\F,\G)-P-$chaotic but not $P-$chaotic.
\begin{example}\label{2.11}
	Let $f:[0,1]\longrightarrow [0,1]$ be the piece--wise linear map defined by $f(0)=0$, $f(\frac{1}{6})=\frac{1}{2}$, $f(\frac{1}{3})=0$, $f(\frac{2}{3})=1$, $f(\frac{5}{6})=\frac{1}{2}$, $f(1)=1$. Then $P(f)$ is dense in $[0,1]$ but $f$ does not have shadowing property. Hence $f$ is not $P-$chaotic map. Let $M(f)$ be the set of all minimal points of $f$. Then $P(f)\subset M(f)$. Therefore $f$ has $(\mathscr{P}(\N_0),\F_{ps})-$shadowing property \cite{Oprocha} and $f$ is $(\mathscr{P}(\N_0),\F_{ps})-P-$chaotic.
\end{example}

\begin{example}\label{eg2}
	Let $X=\{p,q\}$. Consider a discrete metric on $X$ and let $f:X\longrightarrow X$ be an identity map. Then $P(f)$ is dense in $X$ and $f$ has $(\mathscr{D},\F_{t})-$shadowing property \cite{Oprocha2}. Therefore $f$ is $(\mathscr{D},\F_{t})-P-$chaotic.
	
\end{example}

\smallskip
\noindent In the following results we obtain the relationship between $(\F,\G)-P-$chaos and $\F-$Devaney chaos.

\begin{theorem}\label{3.4}
	Let $(X,f)$ be a compact dynamical system. If $f$ is chain mixing, totally transitive contraction map having $(\N_0,\F_s)-$shadowing property, then $f$ is $(\N_0,\F_{\underline{d}})-P-$chaotic if and only if $f$ is $\F_{cf}-$Devaney chaotic.
\end{theorem}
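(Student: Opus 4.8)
The plan is to prove the equivalence by first collapsing both sides to transparent statements under the standing hypotheses, and then establishing the one implication that carries all the content: that density of $P(f)$ forces topological mixing.

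I would begin with two reductions. Every syndetic subset of $\N_0$ has positive lower density, so $\F_s\subseteq\F_{\underline{d}}$; hence the standing hypothesis that $f$ has $(\N_0,\F_s)$-shadowing already yields $(\N_0,\F_{\underline{d}})$-shadowing, and therefore ``$f$ is $(\N_0,\F_{\underline{d}})$-$P$-chaotic'' is, under these hypotheses, nothing more than ``$P(f)$ is dense in $X$''. On the other side, $\F_{cf}$-transitivity is exactly topological mixing, and a mixing map is $\F_{cf}$-sensitive by \cite{TKS}; hence ``$f$ is $\F_{cf}$-Devaney chaotic'' is equivalent to ``$f$ is mixing and $P(f)$ is dense in $X$''. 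The theorem thus reduces to the equivalence, under the standing hypotheses, of ``$P(f)$ is dense'' and ``$f$ is mixing and $P(f)$ is dense'': one implication is trivial, so the whole content is that the standing hypotheses together with density of $P(f)$ force $f$ to be mixing.

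To prove that residual implication I would run the classical argument that chain mixing plus shadowing gives topological mixing, using the contraction hypothesis to make the available $(\N_0,\F_s)$-shadowing behave like ordinary shadowing at the instants that matter. In outline: given nonempty open sets $U$ and $V$, use density of $P(f)$ to pick periodic points $p\in U$ and $q\in V$, and choose $\epsilon>0$ with $B(p,\epsilon)\subseteq U$ and $B(q,\epsilon)\subseteq V$; let $\delta>0$ be the shadowing constant for $\epsilon$; by chain mixing there is $N$ such that for every $n\ge N$ there is a $\delta$-chain of length $n$ from $p$ to $q$, which, concatenated with the genuine orbit of $q$, is a $\delta$-pseudo orbit on $\N_0$. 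A point tracing this pseudo orbit then lies in $U$ and lands in $V$ at time $n$, so $n\in N_f(U,V)$ for all $n\ge N$, i.e.\ $N_f(U,V)$ is cofinite and $f$ is mixing. What the contraction hypothesis is meant to buy is the passage from tracing merely along a syndetic set --- all that $(\N_0,\F_s)$-shadowing guarantees --- to tracing that is effective at time $0$ and at time $n$: once an iterate of the tracing point is $\epsilon$-close to a genuine stretch of the pseudo orbit it should stay close thereafter, and total transitivity (which, together with density of $P(f)$, also provides weak mixing and hence rules out periodic obstructions) can be used to line these stretches up with the times $0$ and $n$.

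Combining the residual implication with the trivial one and feeding both back through the first-step reductions then yields the stated equivalence. The main obstacle is precisely the bridging step just described: the definition of $(\N_0,\F_s)$-shadowing supplies, for each pseudo orbit, only \emph{some} syndetic tracing set, with no uniform bound on its gaps and no guarantee that it contains $0$ or the target time $n$. Getting the contraction hypothesis --- in concert with chain mixing, total transitivity, and density of $P(f)$ --- to recover tracing at those specific instants (equivalently, upgrading $(\N_0,\F_s)$-shadowing to genuine shadowing, or else re-engineering the pseudo orbits so that weak tracing already forces a cofinite hitting set) is where the real work of the proof is concentrated; everything else is bookkeeping.
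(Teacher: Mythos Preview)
Your two reductions are correct --- in fact your observation that $\F_s\subseteq\F_{\underline{d}}$ is sharper than what the paper uses, since it makes the $(\N_0,\F_{\underline{d}})$-shadowing half of $(\N_0,\F_{\underline{d}})$-$P$-chaos automatic under the standing hypotheses. The paper does not isolate this; instead it invokes a result of Darabi \cite{darabi}: for a chain mixing contraction map with $(\N_0,\F_s)$-shadowing, $(\N_0,\F_{\underline{d}})$-shadowing is \emph{equivalent} to topological mixing. Plugging that equivalence in and then applying Theorem~\ref{T35} finishes both directions in one stroke.

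The gap in your proposal is exactly the place you flag as ``where the real work of the proof is concentrated'': the bridging step that converts syndetic tracing into tracing at the specific times $0$ and $n$. You do not actually carry this out, and the sketch you give --- using the contraction hypothesis to propagate closeness forward along genuine orbit segments, with total transitivity and periodic points arranging the alignment --- is not a proof. In particular, contraction only pushes closeness \emph{forward}, so knowing the tracing point is close at some syndetic time past $0$ does not by itself recover closeness at time $0$; and there is no a priori reason the syndetic tracing set, whose gap is not uniform across pseudo orbits, should contain the specific time $n$ you need. This is precisely the content of Darabi's theorem, which the paper simply cites. Note also that, combined with your own observation $\F_s\subseteq\F_{\underline{d}}$, Darabi's result shows mixing follows from the standing hypotheses \emph{alone} --- the density of $P(f)$ you build your argument around is not needed for that step at all.
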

\begin{proof}
	If $f$ is chain mixing contraction mapping with $\F_s-$shadowing property, then $f$ has $(\N_0,\F_{\underline{d}})-$shadowing property if and only if $f$ is mixing \cite{darabi}. Since $P(f)$ is dense in $X$ and $f$ is mixing, the result follows from Theorem \ref{T35}.
\end{proof}

\begin{theorem}\label{3.5}
	Let $(X,f)$ be a dynamical system. Suppose $f$ is an onto map. Then $f$ is $(\mathscr{D},\mathscr{D})-P-$chaotic if and only if $f$ is $P-$chaotic and $\F_{cf}-$Devaney chaotic.
\end{theorem}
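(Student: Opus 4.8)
\noindent The plan is to reduce both directions to the known structural description of $(\mathscr{D},\mathscr{D})$-shadowing (also called \emph{ergodic shadowing}) for surjective maps on a compact space: a surjective map $f$ has the $(\mathscr{D},\mathscr{D})$-shadowing property if and only if $f$ has the shadowing property and $f$ is topologically mixing (see \cite{Oprocha2}). Surjectivity cannot be dropped here: the map $x\mapsto x/2$ on $[0,1]$, whose image is a proper subinterval, has $(\mathscr{D},\mathscr{D})$-shadowing but is not even transitive, and this is precisely why the hypothesis ``$f$ onto'' is assumed in the statement. Once this description is available, both implications are a matter of unwinding the definitions of $(\mathscr{D},\mathscr{D})-P$-chaos, $P$-chaos and $\F_{cf}$-Devaney chaos together with Theorem \ref{T35}; recall that throughout this section $X$ is compact, so $f$ is automatically uniformly continuous, and that topological mixing coincides with $\F_{cf}$-transitivity.

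\noindent First I would prove the forward implication. Assume $f$ is $(\mathscr{D},\mathscr{D})-P$-chaotic, that is, $f$ has $(\mathscr{D},\mathscr{D})$-shadowing and $P(f)$ is dense in $X$. Because $f$ is onto, the structural description gives at once that $f$ has the shadowing property and that $f$ is topologically mixing. Shadowing together with density of $P(f)$ is exactly the statement that $f$ is $P$-chaotic. On the other hand, topological mixing is $\F_{cf}$-transitivity, so $\F_{cf}$-transitivity, density of $P(f)$ and uniform continuity of $f$ let us invoke Theorem \ref{T35} (with $\F=\F_{cf}$) to conclude directly that $f$ is $\F_{cf}$-Devaney chaotic. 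Hence $f$ is both $P$-chaotic and $\F_{cf}$-Devaney chaotic.

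\noindent For the converse, suppose $f$ is $P$-chaotic and $\F_{cf}$-Devaney chaotic. From $P$-chaos we obtain that $f$ has the shadowing property and $P(f)$ is dense, and from $\F_{cf}$-Devaney chaos we obtain that $f$ is $\F_{cf}$-transitive, i.e. topologically mixing (which is consistent with, indeed forces, $f$ onto). Applying the structural description in the other direction, shadowing and topological mixing together yield the $(\mathscr{D},\mathscr{D})$-shadowing property; since $P(f)$ is also dense, $f$ is $(\mathscr{D},\mathscr{D})-P$-chaotic. This closes the equivalence.

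\noindent The only ingredient that is not pure bookkeeping --- and hence the step I expect to be the real obstacle --- is the structural description $(\mathscr{D},\mathscr{D})\text{-shadowing}\iff\text{shadowing}+\text{topological mixing}$ for surjective maps. Its easy half, ``topological mixing and shadowing imply $(\mathscr{D},\mathscr{D})$-shadowing'', is a concatenation argument: split a given $\delta$-pseudo orbit into very long finite blocks, splice consecutive blocks together by genuine orbit segments furnished by topological mixing so that the spliced-in material has density zero, apply ordinary shadowing to the resulting honest pseudo orbit, and check that the tracing point then traces the original pseudo orbit off a set of density zero. The converse half, ``$(\mathscr{D},\mathscr{D})$-shadowing implies shadowing and topological mixing'' for surjective $f$, carries the content: one must preclude $(\mathscr{D},\mathscr{D})$-pseudo orbits that linger, each on a set of positive density, near two dynamically separated parts of the chain recurrent set; for a surjective map this rigidity forces the chain recurrent set to be a single chain mixing piece, after which shadowing upgrades chain mixing to topological mixing and simultaneously promotes density-one tracing of honest pseudo orbits to genuine tracing. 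If a direct reference for this description were unavailable, this is the part I would write out in detail.
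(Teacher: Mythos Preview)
Your proposal is correct and follows essentially the same route as the paper: both directions rest on the structural equivalence, for surjective $f$, between $(\mathscr{D},\mathscr{D})$-shadowing (ergodic shadowing) and the conjunction of ordinary shadowing with topological mixing, after which the definitions unwind exactly as you describe. The paper attributes this equivalence to \cite{Fakhari} rather than \cite{Oprocha2}, and it leaves the appeal to Theorem~\ref{T35} implicit, but otherwise your argument and the paper's coincide.
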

\begin{proof}
	If $f$ is an onto map on $X$ then $(\mathscr{D},\mathscr{D})-$shadowing is equivalent to shadowing and topological mixing \cite{Fakhari}. This implies that $(\mathscr{D},\mathscr{D})-P-$chaotic map is $P-$chaotic and $\F_{cf}-$Devaney chaotic. 
	
	\noindent Conversely, suppose $f$ is $\F_{cf}-$Devaney chaotic and $P-$chaotic then it has shadowing property and it is mixing and hence it has $(\mathscr{D},\mathscr{D})-$shadowing. Thus, $f$ is $(\mathscr{D},\mathscr{D})-P-$chaotic.
\end{proof}

\begin{theorem}\label{3.6}
	Let $(X,f)$ be a dynamical system. Suppose $f$ is chain mixing. If $f$ is $(\N_0,\F_{\underline{d}})-P-$chaotic then it is $\F_t-$Devaney chaotic.
\end{theorem}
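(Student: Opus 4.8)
The plan is to combine the $(\N_0,\F_{\underline{d}})$-shadowing hypothesis with chain mixing to upgrade the dynamics to genuine mixing, and then to invoke Theorem \ref{T35}. First I would recall the characterization from \cite{darabi} (or the relevant source cited for Theorem \ref{3.4}): for a chain mixing map, having $(\N_0,\F_{\underline{d}})$-shadowing forces $f$ to be mixing. More precisely, one chooses nonempty open sets $U,V$, picks a finite chain of $\delta$-pseudo orbits weaving between $U$ and $V$ repeatedly (possible by chain mixing, which yields chains of every sufficiently large length), splices them into a single genuine $\N_0$-pseudo orbit, and then uses the $(\N_0,\F_{\underline{d}})$-shadowing to obtain a tracing point along a set of positive lower density. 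Because the splice was built so that the pseudo orbit sits inside $V$ along a set of integers of full density in the tail (chain mixing lets us make the ``inside $V$'' stretches arbitrarily long and the ``transit'' stretches of fixed bounded length), the tracing set of positive lower density must meet the times when the pseudo orbit is in $V$; hitting $U$ at time $0$ as well shows $N_f(U,V)$ is infinite, and a refinement of this density bookkeeping shows $N_f(U,V)$ is in fact thick, i.e. $f$ is $\F_t$-transitive (indeed one gets $\F_{cf}$-transitivity = mixing).

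Once $f$ is mixing, the remaining ingredients are immediate. Mixing implies $\F_{cf}$-transitive, hence in particular $\F_t$-transitive. Mixing also implies $\F_{cf}$-sensitive by \cite{TKS}, hence $\F_t$-sensitive since $\F_{cf}\subset\F_t$ (every co-finite set is thick). The hypothesis that $f$ is $(\N_0,\F_{\underline{d}})$-$P$-chaotic includes that $P(f)$ is dense in $X$. Therefore $f$ is $\F_t$-transitive, $\F_t$-sensitive and has a dense set of periodic points, which is exactly $\F_t$-Devaney chaoticity. Alternatively, once mixing and density of $P(f)$ are in hand, one can simply quote Theorem \ref{T35} with $\F=\F_{cf}$ to conclude $\F_{cf}$-Devaney chaos, which a fortiori gives $\F_t$-Devaney chaos; I would phrase the writeup this second way to keep it short.

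The main obstacle is the first step: carefully extracting ``$f$ is mixing'' from ``chain mixing plus $(\N_0,\F_{\underline{d}})$-shadowing.'' The density-of-tracing set is only guaranteed to have \emph{positive} lower density, not density one, so one cannot naively say the tracing point lands in $V$ at a prescribed time; the argument must arrange the pseudo orbit so that the ``good'' target times are so abundant (a set of density one, or with complement of density zero inside each long window) that any set of positive lower density is forced to meet them, and then iterate the construction to promote ``meets'' to ``contains a cofinite set'' for $N_f(U,V)$. If the cited result \cite{darabi} already packages exactly this implication (chain mixing $+$ $(\N_0,\F_{\underline{d}})$-shadowing $\Rightarrow$ mixing), as the proof of Theorem \ref{3.4} suggests it does, then this step collapses to a one-line citation and the whole proof is a two-line deduction via Theorem \ref{T35}; the only subtlety then is checking that the contraction hypothesis present in Theorem \ref{3.4} is genuinely not needed here, i.e. that the cited implication holds for chain mixing maps without the contraction assumption, or else adding that hypothesis is harmless because it is not used in the statement of Theorem \ref{3.6} as written.
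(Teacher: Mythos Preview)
Your overall strategy---use the shadowing-type hypothesis together with chain mixing to upgrade transitivity, then invoke density of periodic points and Theorem~\ref{T35}---is the right shape, but you overshoot the target and in doing so create a genuine gap. You aim for \emph{mixing}, planning to cite \cite{darabi} as in Theorem~\ref{3.4}. But look again at how \cite{darabi} is used there: the equivalence ``$(\N_0,\F_{\underline{d}})$-shadowing $\iff$ mixing'' is stated for chain mixing \emph{contraction} maps that already have $(\N_0,\F_s)$-shadowing. Neither the contraction hypothesis nor the $\F_s$-shadowing hypothesis is present in Theorem~\ref{3.6}, so that citation does not apply, and your own closing paragraph correctly flags this as unresolved. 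Your hand-built density argument sketched in the first paragraph is not yet a proof either: with only \emph{positive} lower density of the tracing set you do not get that $N_f(U,V)$ is cofinite, and the ``refinement of this density bookkeeping'' you allude to is exactly the missing content.

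The paper's proof avoids this by aiming lower: it cites \cite{Oprocha2} for the implication ``chain mixing $+$ $(\N_0,\F_{\underline{d}})$-shadowing $\Rightarrow$ \emph{weakly} mixing,'' which needs no contraction or auxiliary shadowing hypothesis. Weak mixing is precisely $\F_t$-transitivity, and together with dense $P(f)$ (from the $P$-chaos hypothesis) Theorem~\ref{T35} or the observation after it yields $\F_t$-Devaney chaos directly. So the fix is simple: replace your appeal to \cite{darabi} and the attempt to reach mixing by the single citation to \cite{Oprocha2} giving weak mixing, and the rest of your second paragraph (with $\F_t$ in place of $\F_{cf}$) goes through verbatim. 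Note also that this explains why the theorem only claims $\F_t$-Devaney chaos rather than $\F_{cf}$-Devaney chaos.
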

\begin{proof}
	Suppose $f$ is a chain mixing. If $f$ is $(\N_0,\F_{\underline{d}})-P-$chaotic then it is weakly mixing \cite{Oprocha2} and therefore $f$ is $\F_t-$Devaney chaotic.
\end{proof}

\begin{theorem}\label{3.7}
	Let $(X,f)$ be a compact dynamical system. Suppose $f$ has shadowing property. Then $f$ is $(\mathscr{D},\F_{\underline{d}})-P-$chaotic if and only if $f$ is $\F_{cf}-$Devaney chaotic.
\end{theorem}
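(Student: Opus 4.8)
The plan is to push this through the same template used for Theorems \ref{3.4}--\ref{3.6}: pin down the transitivity strength that the shadowing hypothesis forces when combined with the given $(\F,\G)-$shadowing property, and then quote Theorem \ref{T35}. The one non-formal ingredient I need is the characterization that, \emph{for a compact system already having the shadowing property, the $(\mathscr{D},\F_{\underline{d}})-$shadowing property is equivalent to topological mixing} — this is the $(\mathscr{D},\F_{\underline{d}})$-analogue of the $(\mathscr{D},\mathscr{D})$-statement quoted in the proof of Theorem \ref{3.5}, and I would cite it from the literature on variants of the shadowing property.

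For the forward implication I would start from the assumption that $f$ is $(\mathscr{D},\F_{\underline{d}})-P-$chaotic, so that $P(f)$ is dense in $X$ and $f$ has the $(\mathscr{D},\F_{\underline{d}})-$shadowing property; combining this with the standing hypothesis that $f$ has shadowing, the characterization above gives that $f$ is topologically mixing, i.e.\ $\F_{cf}-$transitive. Since $X$ is compact, $f$ is uniformly continuous, so I would then invoke Theorem \ref{T35} with $\F=\F_{cf}$ to get that $f$ is $\F_{cf}-$sensitive (alternatively, mixing maps are $\F_{cf}-$sensitive by \cite{TKS}), whence $f$ is $\F_{cf}-$Devaney chaotic. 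For the converse I would assume $f$ is $\F_{cf}-$Devaney chaotic, note that $\F_{cf}-$transitivity is exactly topological mixing and that $P(f)$ is dense, and apply the same characterization in the opposite direction (mixing $+$ shadowing $\Rightarrow$ $(\mathscr{D},\F_{\underline{d}})-$shadowing) to conclude $f$ is $(\mathscr{D},\F_{\underline{d}})-P-$chaotic.

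The hard part will be the equivalence between topological mixing and $(\mathscr{D},\F_{\underline{d}})-$shadowing (under the shadowing hypothesis); everything else is bookkeeping with the definitions of $\F_{cf}-$transitivity and $\F_{cf}-$sensitivity and one appeal to Theorem \ref{T35}. If no direct reference is at hand, I would prove ``$\Rightarrow$'' by taking a genuine $\delta$-pseudo orbit (a $\delta$-pseudo orbit on $\N_0\in\mathscr{D}$), tracing its finite blocks via shadowing and bridging them with mixing so that the tracing errors are confined to a set of density zero, leaving a good set of full lower density; and ``$\Leftarrow$'' by showing that failure of mixing yields open sets $U,V$ with $N_f(U,V)$ not cofinite, from which one constructs a density-one pseudo orbit admitting no $\F_{\underline{d}}-$tracing point, contradicting the hypothesis. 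A minor side check is that $\mathscr{D}\subseteq\F_{\underline{d}}$ and that both families are upward hereditary, so the required nesting holds.
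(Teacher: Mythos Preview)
Your proposal is correct and follows essentially the same approach as the paper: the paper cites \cite{Oprocha2} for the fact that, in the presence of the shadowing property, $(\mathscr{D},\F_{\underline{d}})-$shadowing is equivalent to topological mixing, and then concludes via Theorem \ref{T35}. Your additional sketch of how to prove this equivalence directly is not needed once the reference is in hand, but it does no harm.
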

\begin{proof}
	In the presence of shadowing property, $(\mathscr{D},\F_{\underline{d}})-$shadowing property is equivalent to topological mixing \cite{Oprocha2} and hence $(\mathscr{D},\F_{\underline{d}})-P-$chaos is equivalent to $\F_{cf}-$Devaney chaos by Theorem \ref{T35}.
\end{proof}

\begin{theorem}\label{3.8}
	Let $(X,f)$ be a compact dynamical system. Then following holds:
	\begin{enumerate}
		\item If $f$ is totally transitive $P-$chaotic map then $f$ is $\F_{cf}-$Devaney chaotic.
		\item If $X$ is connected and $f$ is non--wandering $P-$chaotic map then $f$ is $\F_{cf}-$Devaney chaotic. 
	\end{enumerate} 
\end{theorem}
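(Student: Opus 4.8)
The plan is to reduce both statements to a single fact that is already available in the paper: a topologically mixing map with a dense set of periodic points is $\F_{cf}$-Devaney chaotic. Indeed, topological mixing is by definition $\F_{cf}$-transitivity; a mixing map is $\F_{cf}$-sensitive by \cite{TKS}; and $P(f)$ is dense by hypothesis. (Equivalently, since $X$ is compact, $f$ is uniformly continuous, so once $f$ is known to be $\F_{cf}$-transitive with $P(f)$ dense, Theorem \ref{T35} with $\F=\F_{cf}$ applies.) Thus in each part it suffices to prove that $f$ is topologically mixing.

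For (1), I would first promote total transitivity to chain mixing. A transitive map on a compact metric space is chain transitive, so a totally transitive $f$ has $f^{n}$ chain transitive for every $n\in\N$; by the standard characterisation (totally chain transitive being equivalent to chain mixing), $f$ is chain mixing. Then I would invoke the well-known implication that a chain mixing map with the shadowing property is topologically mixing: given nonempty open $U,V$, fix $u\in U$, $v\in V$ and $\eps>0$ with $B(u,\eps)\subset U$ and $B(v,\eps)\subset V$; let $\delta$ be a shadowing constant for $\eps$; use chain mixing to build a $\delta$-pseudo orbit of length $n$ from $u$ to $v$ for every sufficiently large $n$ (extending it to an infinite $\delta$-pseudo orbit along the forward orbit of $v$); an $\eps$-shadowing point $x$ of this pseudo orbit then lies in $U$ and satisfies $f^{n}(x)\in V$, so $N_f(U,V)$ is cofinite. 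Hence $f$ is mixing, and the reduction above completes (1).

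For (2), since $X$ is compact and connected it is a continuum, and $f$ is $P$-chaotic on it, so Proposition \ref{3.1} applies directly and yields that $f$ is $\F_{cf}$-Devaney chaotic; the non-wandering hypothesis is in fact redundant here, because density of $P(f)$ already makes $f$ non-wandering. (If one insists on exploiting non-wandering, the same scheme works: non-wandering together with shadowing forces $CR(f)=X$ and a finite cyclic decomposition of $X$ into closed pieces cyclically permuted by $f$; connectedness of $X$ collapses this to a single piece, so $f$ is chain transitive, hence transitive by shadowing, and then connectedness again rules out a nontrivial cyclic block structure, so $f$ is mixing — conclude as in (1).)

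The main obstacle in both parts is the passage to topological mixing. In (1) it rests on the two standard ingredients ``totally chain transitive $=$ chain mixing'' and ``chain mixing $+$ shadowing $\Rightarrow$ mixing'', the one delicate point being the production of $\delta$-pseudo orbits of \emph{prescribed} length $n$ for all large $n$, which is exactly what chain mixing supplies. In (2), once one observes that a compact connected space is a continuum there is essentially nothing left beyond Proposition \ref{3.1}; in the alternative route the delicate point is that the chain decomposition coming from shadowing consists of \emph{clopen} pieces, so that connectedness can be applied. Once mixing is in hand the conclusion is immediate from ``$\F_{cf}$-transitive $=$ mixing'', ``mixing $\Rightarrow$ $\F_{cf}$-sensitive'', and the density of $P(f)$ (equivalently, Theorem \ref{T35}).
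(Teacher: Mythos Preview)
Your proposal is correct and follows the same overall strategy as the paper: in each part one shows that $f$ is topologically mixing, and then $\F_{cf}$-Devaney chaos follows immediately from mixing $=\F_{cf}$-transitivity together with density of $P(f)$ (via Theorem~\ref{T35} or the fact that mixing implies $\F_{cf}$-sensitivity).

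The execution differs slightly. For (1) the paper simply cites Moothathu \cite{TKS2} for the implication ``shadowing $+$ totally transitive $\Rightarrow$ mixing'', whereas you unpack this through the chain-mixing route (totally transitive $\Rightarrow$ totally chain transitive $\Rightarrow$ chain mixing, and chain mixing $+$ shadowing $\Rightarrow$ mixing); this is a more self-contained argument but amounts to the same thing. For (2) the paper again cites \cite{TKS2} (connected $+$ non-wandering $+$ shadowing $\Rightarrow$ mixing), while you observe that compact and connected already make $X$ a continuum, so Proposition~\ref{3.1} applies directly; this is a genuine simplification and your remark that the non-wandering hypothesis is redundant (since dense $P(f)$ already forces it) is a nice sharpening the paper does not make.
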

\begin{proof}
	\begin{enumerate}
		\item In the presence of shadowing, totally transitive is equivalent to mixing \cite{TKS2}. Since $P(f)$ is dense in $X$, it follows that $f$ is $\F_{cf}-$Devaney chaotic. 
		\item Suppose $X$ is connected and $f$ is non--wandering map which has a shadowing property. Then $f$ is mixing \cite{TKS2} and hence $\F_{cf}-$Devaney chaotic.  
\end{enumerate} 	\end{proof}

\smallskip
\noindent In \cite{Arai}, Arai and Chinen proved that on continuum, $P-$chaos implies mixing and hence $\F_{cf}-$Devaney chaos. If we replace the $P-$chaos by $(\N_0,\F_{cf})-P-$chaos then using the same arguments given in the proof of Corollary 3.4 of \cite{Arai}, we get $f$ is mixing. Note that $(\N_0,\F_{t})-$shadowing is equivalent to $(\N_0,\F_{cf})-$shadowing \cite{Oprocha}. This gives us the following:

\begin{theorem} \label{C8}
	Every $(\N_0,\mathscr{F}_{t})-P-$chaotic map from a continuum to itself is $\mathscr{F}_{cf}-$Devaney chaotic.
\end{theorem}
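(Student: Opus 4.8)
The plan is to follow the route indicated in the paragraph preceding the statement: weaken the shadowing hypothesis to its co-finite form, copy the Arai–Chinen argument to extract topological mixing, and then read off $\mathscr{F}_{cf}$-transitivity and $\mathscr{F}_{cf}$-sensitivity from results already in the paper.

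\textbf{Step 1 (reduce the shadowing hypothesis).} Let $f\colon X\to X$ be $(\N_0,\mathscr{F}_t)-P$-chaotic, with $X$ a (nondegenerate) continuum. By definition $P(f)$ is dense in $X$ and $f$ has the $(\N_0,\mathscr{F}_t)$-shadowing property. Since $(\N_0,\mathscr{F}_t)$-shadowing is equivalent to $(\N_0,\mathscr{F}_{cf})$-shadowing \cite{Oprocha}, $f$ in fact has $(\N_0,\mathscr{F}_{cf})$-shadowing; that is, $f$ is $(\N_0,\mathscr{F}_{cf})-P$-chaotic. Unwinding the definition, this says: for every $\epsilon>0$ there is $\delta>0$ such that every genuine $\delta$-pseudo orbit is $\epsilon$-traced by a single orbit at all but finitely many times.

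\textbf{Step 2 (mixing --- the heart of the matter).} Here I would invoke the proof, not merely the statement, of Corollary 3.4 of \cite{Arai}, which shows that a $P$-chaotic map of a continuum is topologically mixing. That argument produces, for given nonempty open $U,V$, a pseudo orbit that first runs along a periodic orbit meeting $U$, then crosses over by a chain (available in all sufficiently large lengths, using connectedness of $X$ together with density of $P(f)$), and finally runs along a periodic orbit meeting $V$; shadowing that pseudo orbit yields, for every large $n$, a point witnessing $n\in N_f(U,V)$, so $N_f(U,V)$ is cofinite. The only adjustment for our weaker hypothesis is to observe that $(\N_0,\mathscr{F}_{cf})$-shadowing traces this pseudo orbit at every time outside a finite set, and that ``cofinite'' is precisely the amount of slack the conclusion $N_f(U,V)\in\mathscr{F}_{cf}$ can absorb; running the construction for all large crossing lengths still yields $N_f(U,V)\in\mathscr{F}_{cf}$. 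I expect this bookkeeping --- checking that the allowed finite set of untraced instants cannot destroy membership $n\in N_f(U,V)$ for large $n$ (e.g.\ by padding the initial $U$-block so that time $0$ falls outside the bad set, or simply discarding the finitely many bad $n$) --- to be the one place that needs genuine, though routine, care; everything else is verbatim Arai--Chinen.

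\textbf{Step 3 (assemble $\mathscr{F}_{cf}$-Devaney chaos).} Once $f$ is mixing it is $\mathscr{F}_{cf}$-transitive by definition, and mixing maps are $\mathscr{F}_{cf}$-sensitive by \cite{TKS}; alternatively, since a nondegenerate continuum is an infinite metric space without isolated points and $f$ is uniformly continuous (being continuous on the compact $X$), one may instead quote Theorem \ref{T35} with $\mathscr{F}=\mathscr{F}_{cf}$ to obtain $\mathscr{F}_{cf}$-sensitivity from $\mathscr{F}_{cf}$-transitivity together with density of $P(f)$. Either way, $f$ is $\mathscr{F}_{cf}$-transitive, $\mathscr{F}_{cf}$-sensitive and has a dense set of periodic points, i.e.\ $f$ is $\mathscr{F}_{cf}$-Devaney chaotic, which is the assertion.
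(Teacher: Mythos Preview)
Your proposal is correct and follows essentially the same route as the paper: the paper's argument is the paragraph immediately preceding the theorem, which reduces $(\N_0,\mathscr{F}_t)$-shadowing to $(\N_0,\mathscr{F}_{cf})$-shadowing via \cite{Oprocha}, asserts that the proof of Corollary~3.4 of \cite{Arai} carries over verbatim to yield mixing, and then reads off $\mathscr{F}_{cf}$-Devaney chaos. If anything, your Step~2 is more honest than the paper in flagging the minor bookkeeping needed to absorb the finitely many untraced times into the cofinite conclusion.
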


\smallskip
\noindent In the following diagram we summarize the relationship between various types of $\F-$Devaney chaos and $(\F,\G)-P-$chaos. Note that we have used abbreviation DC for Devaney chaos and PC for $P-$chaos in the diagram. Also, the question mark on the arrows means we neither have a counter example to negate the arrow nor we have relation between the corresponding properties. For instance, we do not know under what conditions  $(\mathscr{D},\F_{\underline{d}})-P-$chaos will imply $(\mathscr{D},\mathscr{D})-P-$chaos nor we have counter example. 

\medskip 

\medskip 

\tikzstyle{startstop} = [rectangle, rounded corners, minimum width=3cm, minimum height=0.5cm, text centered, draw=white]
\tikzstyle{arrow} = [thick, ->, >=stealth]
\begin{tikzpicture}[node distance=2.2cm]
	\hspace{-1.30cm}
	\node(DF)[startstop]{$(\mathscr{D},\F_{\underline{d}})-$PC};	
	\node(Fcf)[startstop, left of=DF, xshift=-2cm] {$\F_{cf}-$DC};
	\node(DD)[startstop, right of=DF, xshift=2cm] {$(\mathscr{D},\mathscr{D})-$PC};
	\node(PC)[startstop, right of=DD, xshift=2cm]{\hspace{-30pt} PC};		
	\node(NF)[startstop, below of=DF] {$(\N,\F_{\underline{d}})$PC};
	\node(Fcf2)[startstop, right of=NF, xshift=2cm] {$\F_{cf}-$DC};
	\node(PC2)[startstop, below of=PC]{\hspace{-30pt} PC};
	\node(Ft)[startstop, below of=NF] {$\F_t-$DC};
	\node(Fts)[startstop, right of=Ft, xshift=2cm] {$\F_{ts}-$DC};	
	\node(Fs)[startstop, right of=Fts,xshift=2cm] {$\F_s-$DC};
	\node(DC)[startstop, left of=Ft, xshift=-2cm]{\hspace{30pt} DC};
	\node(DC2)[startstop, right of=Fs, xshift=2cm] {\hspace{-30pt}DC};

	\draw [arrow] (DD) -- node[anchor=south] {\tiny Theorem \ref{3.5}}(PC);
	\draw [arrow, transform canvas={yshift=-1.2ex}] (PC) -- node[anchor=north] {\tiny Theorem \ref{3.5}}(DD);
	\draw [arrow] (DD) -- (DF);
	\draw [arrow, transform canvas={yshift=1.2ex}] (DF) -- node[anchor=south] { ?}(DD);
	\draw [arrow] (DD) -- node[anchor=east] {\tiny Theorem \ref{3.5}}(Fcf2);
	\draw [arrow, transform canvas={xshift=1.2ex}] (Fcf2) -- node[anchor=west] {\tiny Theorem \ref{3.5}}(DD);
	\draw [arrow] (DF) -- (NF);
	\draw [arrow, transform canvas={xshift=1.2ex}] (NF) -- node[anchor=west] { ?}(DF);
	\draw [arrow] (PC2) -- node[anchor=south] {\tiny Theorem \ref{3.1}} node[anchor=north]{\tiny Theorem \ref{3.8}}(Fcf2);
	\draw [arrow,  transform canvas={yshift=1.2ex}] (Fcf2) -- node[anchor=south] {\tiny Theorem \ref{3.4}}(NF);
	\draw [arrow] (NF) -- node[anchor=north] {\tiny Theorem \ref{3.4}}(Fcf2);
	\draw [arrow] (NF) -- node[anchor=east] {\tiny Theorem \ref{3.6}}(Ft);
	\draw [arrow, transform canvas={xshift=1.2ex}] (Ft) -- node[anchor=west] { ?}(NF);
	\draw [arrow] (Fcf2) -- (Fts);
	\draw [arrow, transform canvas={xshift=1.2ex}] (Fts) --  node[anchor=west] {?} (Fcf2);
	
	\draw [arrow] (Fts) -- (Ft);
	\draw [arrow, transform canvas={yshift=1.2ex}] (Ft) -- node[anchor=south] {\tiny Theorem \ref{3.10}}(Fts);
	
	\draw [arrow, transform canvas={yshift=1.2ex}] (DF) --node[anchor=south] {\tiny Theorem \ref{3.7}} (Fcf);
	\draw [arrow] (Fcf) -- node[anchor=north] {\tiny Theorem \ref{3.7}}(DF);
	\draw [arrow,  transform canvas={yshift=1.2ex}] (Fs) -- node[anchor=south]{\tiny Example \ref{2.6}}(Fts);
	\draw [arrow] (Fts) -- node[anchor=west, yshift=1.2ex]{\hspace{-0.5cm} \huge $\times$}(Fs);			
	\draw [arrow] (Ft) -- (DC);	
	\draw [arrow, transform canvas={yshift=1.2ex}] (DC) --node[anchor=south]{\tiny Proposition \ref{p3.7}} (Ft);																									
	\draw [arrow] (Fs) -- (DC2);
	\draw [arrow, transform canvas={yshift=1.2ex}] (DC2) --node[anchor=south]{\tiny Proposition \ref{3.9}} (Fs);																																
\end{tikzpicture}

\section{$\F-$Devaney chaos on interval maps}
\medskip
\noindent Recall, a map $f$ is said to be \emph{locally eventually onto (or leo)} if, for every non--empty open subset $U$ of $X$, there exists an integer $N$ such that $f^n(U)=X$ for all $n\geq N$. Note that locally eventually onto map is always a mixing.
\begin{example}
	Consider an interval map $f:[a,b]\longrightarrow [a,b]$ such that $f$ is a leo map. Then $f$ is mixing. Since on interval, transitivity implies $P(f)$ dense \cite{SR}, it follows that $P(f)$ is dense in $[a,b]$ and hence $f$ is $\F_{cf}-$Devaney chaotic by Theorem \ref{T35}.
\end{example}

\begin{lemma}\cite{SR}\label{L1}
	Consider a dynamical system $([0,1],f)$. Then the following are equivalent:
	\begin{enumerate}
		\item $f$ is totally transitive.
		\item $f$ is transitive with at least one periodic point of odd period different from $1$.
		\item $f$ is mixing.
		\item $f^n$ is mixing, for every $n\in \N$.
		
	\end{enumerate}
\end{lemma}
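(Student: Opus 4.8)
The plan is to route all four conditions through $(3)$, proving $(3)\Rightarrow(4)\Rightarrow(1)\Rightarrow(3)$ and then splicing $(2)$ into this cycle via $(2)\Rightarrow(3)$ and $(3)\Rightarrow(2)$. The first leg is soft and uses nothing about the interval: if $f$ is mixing and $n\in\N$, then for non-empty open $U,V\subseteq[0,1]$ the set $N_f(U,V)$ is co-finite, hence contains $nk$ for every large $k$; since $nk\in N_f(U,V)$ is exactly the assertion $k\in N_{f^n}(U,V)$, the set $N_{f^n}(U,V)$ is co-finite, i.e.\ $f^n$ is mixing. This gives $(3)\Rightarrow(4)$, and since mixing maps are transitive, $(4)\Rightarrow(1)$ is immediate. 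What remains is $(1)\Rightarrow(3)$ together with the placement of $(2)$.

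For $(1)\Rightarrow(3)$ I would invoke the classical structure theorem for transitive interval maps \cite{SR}: if $f\colon[0,1]\to[0,1]$ is transitive but \emph{not} mixing, then there is a fixed point $c\in(0,1)$ with $f([0,c])=[c,1]$ and $f([c,1])=[0,c]$, and $f^2$ is transitive on each of the two subintervals $[0,c]$ and $[c,1]$ (which it maps onto themselves). Granting this, the two subintervals are $f^2$-invariant, have non-empty interior, cover $[0,1]$, and neither is dense, so $f^2$ is not transitive and $f$ fails to be totally transitive; contrapositively, $(1)\Rightarrow(3)$. I expect proving this structure theorem to be the main obstacle: starting from a point with dense orbit and a small interval $U$, one tracks the intervals $f^k(U)$, uses repeatedly that continuous images of intervals are intervals, and shows that if $f$ is not mixing then these image intervals must organise into a period-two alternation about a fixed point --- which is exactly the stated decomposition. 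This one-dimensional core is precisely what is borrowed from \cite{SR}, and a self-contained argument would be a short but non-routine lemma in interval dynamics.

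Finally I would attach $(2)$, recalling first that a transitive interval map has a dense set of periodic points \cite{SR,Michel}, so none of these conditions is vacuous. For $(2)\Rightarrow(3)$: assume $f$ is transitive with a periodic point of odd period $m\ne 1$ and, for contradiction, that $f$ is not mixing; by the structure theorem $f$ admits the swap decomposition above, and then every periodic orbit of period greater than $1$ avoids the fixed point $c$ and (because $f$ maps $(0,c)$ into $(c,1)$ and back) alternates between $(0,c)$ and $(c,1)$, hence has even period --- contradicting the existence of a periodic point of odd period $m\ne 1$. So $f$ is mixing. For $(3)\Rightarrow(2)$: a mixing interval map is transitive, and it is classical that mixing interval maps have periodic points of all periods \cite{SR} (this can also be deduced from the positive topological entropy of transitive interval maps); in particular $f$ has a periodic point of period $3$, which is odd and different from $1$. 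Combined with the cycle above, this yields $(1)\Leftrightarrow(2)\Leftrightarrow(3)\Leftrightarrow(4)$.
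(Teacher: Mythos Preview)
The paper does not prove this lemma at all: it is stated with the citation \cite{SR} attached and no proof environment follows --- the authors simply import the result from Ruette's monograph and use it as a black box in the subsequent theorem. So there is no ``paper's own proof'' to compare against.

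Your sketch is correct and is essentially the standard route one finds in \cite{SR}. The soft implications $(3)\Rightarrow(4)\Rightarrow(1)$ are fine, and you have correctly identified that the interval-specific content lives in the structure theorem for transitive non-mixing interval maps (the swap decomposition about a fixed point), from which both $(1)\Rightarrow(3)$ and $(2)\Rightarrow(3)$ follow by the arguments you gave. Your $(3)\Rightarrow(2)$ via ``mixing interval maps have periodic points of all periods'' is also the standard move. The only place one might ask for more is the structure theorem itself, which you explicitly flag as borrowed; since the paper treats the entire lemma as a citation, this is entirely in keeping with how the result is used.
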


\begin{theorem}
	Consider dynamical system $([0,1],f)$. Then following holds:
	\begin{enumerate}
		\item $f$ is $\F_t-$Devaney chaotic then $f$ is $\F_{cf}-$Devaney chaotic.
		\item $f$ is Devaney chaotic with at least one periodic point of odd period different from $1$ if and only if $f$ is $\F_{cf}-$Devaney chaotic.
		\item $f$ is $\F_{cf}-$Devaney chaotic if and only if $f^n$ is $\F_{cf}-$Devaney chaotic for every $n\in \N$.
		\item If $f$ is a transitive map having at least two fixed points then it is $\F_{cf}-$Devaney chaotic.
	\end{enumerate}
\end{theorem}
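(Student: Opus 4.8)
The strategy is uniform across the four clauses: in each one I will reduce the conclusion to producing $\F_{cf}$-transitivity, i.e.\ topological mixing, and then apply Theorem~\ref{T35}. That theorem is available here because a continuous self-map of the compact interval $[0,1]$ is automatically uniformly continuous, and because a transitive interval map automatically has $P(f)$ dense (\cite{SR}); thus the moment $f$ — or an iterate $f^n$ — is shown to be mixing, Theorem~\ref{T35} upgrades it directly to $\F_{cf}$-Devaney chaos. Lemma~\ref{L1} is the engine that turns the various transitivity hypotheses into mixing, and it is the only substantive input needed for (1)--(3).

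For (1): $\F_t$-Devaney chaos gives that $f$ is $\F_t$-transitive, which on a compact space means weakly mixing, and a weakly mixing map is totally transitive; by Lemma~\ref{L1}, a totally transitive interval map is mixing, hence $\F_{cf}$-transitive, and since $P(f)$ is dense, Theorem~\ref{T35} with $\F=\F_{cf}$ gives the conclusion. For (2): if $f$ is Devaney chaotic with a periodic point of odd period different from $1$, then $f$ is transitive with such a point, so $f$ is mixing by Lemma~\ref{L1}, and Theorem~\ref{T35} applies; conversely, $\F_{cf}$-Devaney chaos forces $f$ to be mixing, so Lemma~\ref{L1} returns transitivity together with a periodic point of odd period different from $1$, while $\F_{cf}$-sensitivity gives ordinary sensitivity and $P(f)$ is dense, whence $f$ is Devaney chaotic. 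For (3): the ``if'' direction is just the case $n=1$; for ``only if'', $\F_{cf}$-Devaney chaos means $f$ is mixing with $P(f)$ dense, Lemma~\ref{L1} makes every $f^n$ mixing, the inclusion $P(f)\subseteq P(f^n)$ keeps $P(f^n)$ dense, and $f^n$ is again uniformly continuous, so Theorem~\ref{T35} yields that each $f^n$ is $\F_{cf}$-Devaney chaotic.

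Item (4) is the one requiring input beyond Lemma~\ref{L1}. Here I will use the structure theorem for transitive interval maps (\cite{SR}): a transitive $f$ on $[0,1]$ that is not mixing fails to be totally transitive, and there is a fixed point $c\in(0,1)$ with $f([0,c])=[c,1]$ and $f([c,1])=[0,c]$. For such an $f$, every point of $[0,c)$ is carried into $[c,1]$ and every point of $(c,1]$ into $[0,c]$, so $c$ is the \emph{only} fixed point. Consequently a transitive map with at least two fixed points cannot sit in this non-mixing case, so it is mixing; transitivity makes $P(f)$ dense, and Theorem~\ref{T35} again gives $\F_{cf}$-Devaney chaos.

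I expect the main obstacle to be precisely this last point: (1)--(3) are essentially bookkeeping around Lemma~\ref{L1} plus the automatic uniform continuity that lets Theorem~\ref{T35} run, whereas (4) needs the genuinely separate structural fact that a transitive non-mixing interval map swaps two halves across its unique fixed point. Everything else — verifying that $\F_{cf}$-sensitivity implies sensitivity, that $P(f)\subseteq P(f^n)$, and that density of periodic points is inherited — is routine.
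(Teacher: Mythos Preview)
Your proposal is correct and follows essentially the same route as the paper: reduce each clause to mixing via Lemma~\ref{L1} (or the interval structure theorem in~(4)) and then invoke Theorem~\ref{T35}. You are in fact slightly more thorough than the paper --- you supply the converse in~(2), which the paper leaves implicit, and in~(4) you explain \emph{why} two fixed points force total transitivity (via the half-swapping structure of a non-mixing transitive interval map), whereas the paper simply asserts ``therefore $f$ is totally transitive'' with an implicit appeal to~\cite{SR}.
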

\begin{proof}
	\begin{enumerate}
		\item Suppose $f$ is $\F_t-$Devaney chaotic. Therefore $f$ is weakly mixing on $[0,1]$. Hence by Lemma \ref{L1}, $f$ is mixing and therefore $f$ is $\F_{cf}-$Devaney chaotic.
		\item By Lemma \ref{L1}, $f$ is $f$ is mixing. Therefore, $\F_{cf}-$Devaney chaotic. 
		\item Suppose $f$ is $\F_{cf}-$Devaney chaotic. Therefore $f$ is mixing and hence $f^m$ is mixing. Again, $f$ is mixing implies $f^m$ is transitive, for each $m\in \N$. Therefore $P(f^m)$ is dense in $[0,1]$. Hence by Theorem \ref{T35}, $f^m$ is $\F_{cf}$ Devaney chaotic.
		
		\noindent Conversely, suppose $f^n$ is $\F_{cf}-$Devaney chaotic. Therefore $f$ is mixing. Hence by Lemma \ref{L1}, $f$ is $\F_{cf}-$Devaney chaotic.
		
		\item Suppose $f$ has at least two fixed points. Therefore $f$ is totally transitive. By Lemma \ref{L1}, $f$ is mixing and hence $\F_{cf}-$Devaney chaotic.
	\end{enumerate}
\end{proof}
	
	\bibliographystyle{amsplain}
	
\end{document}